%

\documentclass [intlimits, 11pt]{article}

\frenchspacing

\sloppy

\setlength {\parindent}{0pt}


\oddsidemargin = 8mm
\evensidemargin = 8mm
\headsep = 10mm
\topmargin = 0mm
\textheight = 21.2cm
\textwidth = 14.3cm



\usepackage[mathscr]{eucal}
\usepackage{amsmath}
\usepackage{amsthm}
\usepackage{amssymb}
\usepackage{amsfonts} 
\usepackage{mathrsfs}
\usepackage{amscd}
\usepackage{enumerate} 
\usepackage{color,wrapfig} 
\usepackage[pdftex]{graphicx}
\usepackage{layout}
\usepackage{float} 
\usepackage{mathtools} 
\usepackage{textcomp} 







\newtheorem{Theorem}[equation]{Theorem}
\newtheorem*{IntroTheorem}{Theorem}

\newtheorem{Lemma}[equation]{Lemma}
\newtheorem{Definition}[equation]{Definition}
\newtheorem{Remark}[equation]{Remark}

\newtheorem{Definition and Proposition}[equation]{Definition and Proposition}
\newtheorem{Definition and Theorem}[equation]{Definition and Theorem}
\newtheorem{Definition and Remark}[equation]{Definition and Remark}
\newtheorem{Definition and Lemma}[equation]{Definition and Lemma}
\newtheorem{Definition and Example}[equation]{Definition and Example}

\newtheorem{`Theorem'}[equation]{`Theorem'}


\newcommand{\purge}[1]{} 

\newcommand{\vsp}{\vspace{2mm}}



\def\epsilon{\varepsilon}
\def\phi{\varphi}

\newcommand{\al}{\alpha}
\newcommand{\be}{\beta}
\newcommand{\ga}{\gamma}

\newcommand{\ka}{\kappa}
\newcommand{\lam}{\lambda}

\newcommand{\si}{\sigma}


\newcommand{\De}{\Delta}


\newcommand{\gadot}{{\dot{\ga}}}

\newcommand{\phidot}{{\dot{\phi}}}



\newcommand{\xiti}{{\ti{\xi}}}




\newcommand{\xti}{{\ti{x}}}
\newcommand{\yti}{{\ti{y}}}
\newcommand{\zti}{{\ti{z}}}

\newcommand{\ubar}{\bar{u}}
\newcommand{\vbar}{\bar{v}}
\newcommand{\wbar}{\bar{w}}






\def\N{{\mathbb N}}

\def\R{{\mathbb R}}

\def\Z{{\mathbb Z}}


\newcommand{\mcA}{\mathcal A}
\newcommand{\mcB}{\mathcal B}
\newcommand{\mcC}{\mathcal C}

\newcommand{\mcF}{\mathcal F}
\newcommand{\mcG}{\mathcal G}

\newcommand{\mcM}{\mathcal M}

\newcommand{\mcV}{\mathcal V}
\newcommand{\mcW}{\mathcal W}
\newcommand{\mcX}{\mathcal X}
\newcommand{\mcY}{\mathcal Y}
\newcommand{\mcZ}{\mathcal Z}



\newcommand{\mcMhat}{\widehat{\mathcal M}}



\newcommand{\mfx}{\mathfrak x}

\newcommand{\mfxti}{\ti{{\mathfrak x}}}


\newcommand{\mfmhat}{{\hat{\mathfrak m}}}




\newcommand{\ti}{\tilde}
\newcommand{\x}{\times}
\newcommand{\del}{\partial}



\newcommand{\beq}{\begin{equation}}
\newcommand{\eeq}{\end{equation}}
\newcommand{\beqs}{\begin{equation*}}
\newcommand{\eeqs}{\end{equation*}}






\DeclareMathOperator{\Crit}{Crit}

\DeclareMathOperator{\grad}{grad}

\DeclareMathOperator{\Hom}{Hom}

\DeclareMathOperator{\Ind}{Ind}







%
\def\slashii#1{\setbox0=\hbox{$#1$}             
\dimen0=\wd0                                 
\setbox1=\hbox{\sl/} \dimen1=\wd1            
\ifdim\dimen0>\dimen1                        
\rlap{\hbox to \dimen0{\hfil\sl/\hfil}}   
#1                                        
\else                                        
\rlap{\hbox to \dimen1{\hfil$#1$\hfil}}   
\hbox{\sl/}                               
\fi}                                         %
%
\def\slashiii#1{\setbox0=\hbox{$#1$}#1\hskip-\wd0\hbox to\wd0{\hss\sl/\/\hss}}
%






\newcommand{\refmodulispacecompact}{Theorem \ref{modulispacecompact}}



\newcommand{\refalmoststrict}{Definition \ref{almoststrict}}


\newcommand{\refsubsectionmorsencat}{Subsection \ref{subsectionmorsencat}}



\newcommand{\refVnglobular}{Lemma \ref{Vnglobular}}

\newcommand{\refVW}{Remark \ref{VW}}

\newcommand{\refhomncat}{Theorem \ref{homncat}}

\newcommand{\refWncat}{Theorem \ref{Wncat}}

\newcommand{\refwfunctor}{Theorem \ref{wfunctor}}

\newcommand{\refhomfunctor}{Theorem \ref{homfunctor}}


\begin{document}

\title{On the image of the almost strict Morse $n$-category under almost strict $n$-functors}

\author{Sonja Hohloch \\ \'Ecole Polytechnique F\'ed\'erale de Lausanne (EPFL)}

\date{April 4, 2014}

\maketitle

\begin{abstract}
\noindent
In an earlier work, we constructed the almost strict Morse $n$-category $\mathcal X$ which extends Cohen $\&$ Jones $\&$ Segal's flow category. In this article, we define two other almost strict $n$-categories $\mathcal V$ and $\mathcal W$ where $\mathcal V$ is based on homomorphisms between real vector spaces and $\mathcal W$ consists of tuples of positive integers. The Morse index and the dimension of the Morse moduli spaces give rise to almost strict  $n$-category functors $\mathcal F : \mathcal X \to \mathcal V$ and $\mathcal G : \mathcal X \to \mathcal W$.
\end{abstract}


\section{Introduction}

The aim of the present paper is to gain a better understanding of the almost strict Morse $n$-category $\mcX$ introduced in Hohloch \cite{hohloch} whose construction is quite involved. Thus we now come up with another two almost strict $n$-categories $\mcV$ and $\mcW$ which retain some of the properties of $\mcX$, but are much more accessible. This imitates the idea of representation theory of groups where one studies homomorphisms (`representations') from a given group into a `nicer' group.

\vsp

The Morse $n$-category $\mcX$ extends the flow category introduced by Cohen $\&$ Jones $\&$ Segal \cite{cohen-jones-segal} whose objects are the critical points of a Morse function and whose morphisms are the Morse moduli spaces between critical points.

Roughly, the construction of $\mcX$ goes as follows. Let $M$ be a smooth compact $m$-dimensional manifold and $f_0 : M \to \R$ a Morse function, i.e. the Hessian $D^2f_0$ is nondegenerate on the set of critical points $\Crit(f_0)=\{x_0 \in M \mid Df_0(x)=0\}$. The Morse index $\Ind(x_0)$ of a critical point $x_0$ is given by the number of negative eigenvalues of $D^2f_0(x)$. We choose a `good' metric $g_0$ and consider the Morse moduli space $\mcM(x_0, y_0, f_0):=\mcM(x_0, y_0, f_0, g_0)$ consisting of negative gradient flow lines $\gadot(t) =- \grad_g f_0(\ga(t))$ from $x_0 \in \Crit(f_0)$ to $y_0 \in \Crit(f_0)$ which is a smooth manifold of dimension $\Ind(x_0) - \Ind(y_0)$. Dividing by the $\R$-action induced by the flow and suitably compactifying, we obtain the compact, unparametrized Morse moduli space $\mcMhat(x_0, y_0, f_0):= \overline{\mcM(x_0, y_0, f_0) \slash \R}$ which is a $(\Ind(x_0)-\Ind(y_0)-1)$-dimensional manifold with corners.

What we just did on $M$, we repeat on $\mcMhat(x_0, y_0, f_0)$ paying attention to the boundary strata: We pick a `good' Morse function $f_{1 \left[\begin{smallmatrix} x_0 \\ y_0 \end{smallmatrix} \right]}$ on $\mcMhat(x_0, y_0, f_0)$ whose gradient vector field is tangential to the boundary strata. `Good' means in this context that the Morse function needs to be compatible with the boundary structure of $\mcMhat(x_0, y_0)$ which consists of cartesian products of unparametrized Morse moduli spaces of certain critical points and that the negative gradient flow flows from higher dimensional strata to lower dimensional strata, but never back. Moreover, pick a suitable metric $g_{1 \left[\begin{smallmatrix} x_0 \\ y_0 \end{smallmatrix} \right]}$. Then, for $x_1$, $y_1 \in \Crit( f_{1 \left[\begin{smallmatrix} x_0 \\ y_0 \end{smallmatrix} \right]})$, the space $\mcMhat(x_1, y_1, f_{1 \left[\begin{smallmatrix} x_0 \\ y_0 \end{smallmatrix} \right]})$ is again a manifold with corners on which we can choose a `good' Morse function $f_{2 \left[\begin{smallmatrix} x_1 x_0 \\ y_1  y_0 \end{smallmatrix} \right]}$ and iterate again.
Since the dimension drops at least by one when passing from $M$ to $\mcMhat(x_0, y_0, f_0)$ and also from $\mcMhat(x_0, y_0, f_0)$ to $\mcMhat(x_1, y_1, f_{1 \left[\begin{smallmatrix} x_0 \\ y_0 \end{smallmatrix} \right]})$ the iteration procedure terminates after a finite number of steps.

Obviously, this construction depends on the choice of a family of Morse functions $F:=\{f_0, f_{1 \left[\begin{smallmatrix} x_0 \\ y_0 \end{smallmatrix} \right]}, \dots\}$ and metrics $G:= \{g_0, g_{1 \left[\begin{smallmatrix} x_0 \\ y_0 \end{smallmatrix} \right]}, \dots\}$.

\vsp

Since the definition of an almost strict $n$-category is quite lengthy we do not recall it here in the introduction, but refer the reader to \refalmoststrict.

\begin{IntroTheorem} [Hohloch \cite{hohloch}]
The above described iteration of Morse moduli spaces admits the structure of an almost strict $n$-category. The resulting $n$-category is denoted by $\mcX:=\mcX(F,G)$.
\end{IntroTheorem}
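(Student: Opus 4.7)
The plan is to construct the underlying globular data and compositional structure required by \refalmoststrict, and then verify the axioms inductively on $n$. The termination of the iteration (finite $n$) was already observed above: $\dim \mcMhat$ strictly decreases at each level.

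First, I set up the underlying $n$-globular set. The $0$-cells are $\Crit(f_0)$. Inductively, having defined parallel $(k-1)$-cells $x$ and $y$ over some compatible string of lower critical points, the $k$-cells from $x$ to $y$ are the points of the iterated Morse moduli space $\mcMhat(x,y,f_{k-1[\cdot]})$ attached to that string. Source and target maps are endpoint evaluations. These maps descend globally and consistently across iterations because $F$ and $G$ have been chosen so that $-\grad f_{k[\cdot]}$ is tangent to the stratification of the underlying moduli space and flows only from higher- to lower-dimensional strata.

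Next I define identities and the family of compositions. For a $k$-cell $x$, the identity $(k+1)$-cell is the constant element $x \in \mcMhat(x,x,f_{k[\cdot]})$, which is a bona fide element since $x \in \Crit(f_{k[\cdot]})$. For $j<k$, the $j$-composition of parallel $k$-cells is concatenation of broken trajectories, built on the standard identification of codimension-one boundary strata of $\mcMhat(x,z,f_{k[\cdot]})$ as disjoint unions of products $\mcMhat(x,y,f_{k[\cdot]}) \times \mcMhat(y,z,f_{k[\cdot]})$ over intermediate critical points $y$. After passing to unparametrized compactified moduli, concatenation is strictly associative and strictly unital, so each individual composition satisfies the axioms of a strict category on the nose.

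It remains to verify globularity, source-target compatibility across compositions, and the interchange (middle-four) laws relating different compositions. Globularity and source-target compatibility follow directly from the boundary-stratum description of $\mcMhat$. The main obstacle is the interchange law: it asks that simultaneous concatenation in two independent globular directions yield the same element of the iterated $\mcMhat$. I would handle this by induction on the depth $k$, reducing interchange at level $k$ to a compatible product decomposition of a corner stratum of $\mcMhat(x,z,f_{k[\cdot]})$. Concretely, by the boundary combinatorics of iterated Morse moduli spaces (cf.\ \refmodulicorners) a codimension-two corner splits as a fourfold product of lower moduli spaces in exactly the way that realizes the two orders of concatenation as the same unparametrized broken trajectory. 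The weakening from strict to almost strict is precisely what absorbs the residual canonical identifications between these two presentations, and is the step where the `goodness' of the family $(F,G)$ is crucially used.
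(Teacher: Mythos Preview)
The paper does not prove this theorem here; it is quoted from the earlier work \cite{hohloch}, and Section~\ref{subsectionmorsencat} only \emph{recalls} the underlying data (the $n$-globular set $X$, the source/target functions, the identities $\mathbf 1$, and the composites $\circ_p$). So there is no proof in this paper to compare against, only the construction.

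Your sketch is in the right spirit but diverges from the actual construction in a couple of places. First, an $l$-cell in $X(l)$ is not an arbitrary point of the iterated moduli space but a \emph{critical point} $x_l$ of the next-level Morse function $f_{l[\cdot]}$, paired with the full history $\mcMhat(x_{l-1},y_{l-1},f_{l-1[\cdot]})$. Second, your description of $\circ_p$ as ``concatenation of broken trajectories'' matches the case $p=l-1$, but for $p\le l-2$ the composite in the paper is defined by pairing the critical points at every level above $p$, producing tuples like $(a_k,c_k)$; this is a product-type operation on the levels above $p$, not a single concatenation. Third, your claim that associativity and unitality hold ``on the nose'' is too strong: the composite yields nested pairs such as $((a_l,c_l),e_l)$ versus $(a_l,(c_l,e_l))$, which are only canonically isomorphic, so the ``almost strict'' relaxation is already needed for associativity (and for the identity axioms, where one uses $\R^0\times\R^{i}\simeq\R^{i}$-type identifications), not only for interchange.
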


In a future project, we will investigate the dependence of $\mcX$ on the choices $F$ and $G$. But in the present paper, we fix a choice $\mcX :=\mcX(F,G)$ and look for other less complicated almost strict $n$-categories which pertain some of the information of $\mcX$. 

More precisely, we will define an almost strict $n$-category $\mcV$ roughly consisting of tuples of spaces of linear maps and a functor of almost strict $n$-categories $\mcF : \mcX \to \mcV$ which maps a moduli space $\mcMhat(x_l, y_l, f_{l \left[\begin{smallmatrix} x_{l-1}, \dots, x_0 \\ y_{l-1}, \dots, y_0 \end{smallmatrix} \right]})$ to $\Hom(\R^{\Ind(x_l)}, \R^{\Ind(y_l)}) \times \cdots \times \Hom(\R^{\Ind(x_1)}, \R^{\Ind(y_1)}) $.

\vsp

There is another almost strict $n$-category $\mcW$ which roughly consists of tuples of natural numbers (including zero) and a functor of almost strict $n$-categories $\mcG: \mcX \to \mcW$ which maps a moduli space $\mcMhat(x_l, y_l, f_{l \left[\begin{smallmatrix} x_{l-1}, \dots, x_0 \\ y_{l-1}, \dots, y_0 \end{smallmatrix} \right]})$ to the tuple $\left[\begin{smallmatrix} \Ind(x_{l}), \dots, \Ind( x_0) \\ \Ind(y_{l}), \dots, \Ind(y_0) \end{smallmatrix} \right]$.

\begin{IntroTheorem}
$\mcV$ and $\mcW$ are almost strict $n$-categories.
\end{IntroTheorem}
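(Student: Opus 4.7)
The plan is to introduce both $n$-categories explicitly — specifying the $k$-morphisms, the source and target maps, the identities, and the compositions at every level $k\in\{0,1,\dots,n\}$ — and then verify in order the axioms of \refalmoststrict. Both constructions are designed to be skeletal shadows of $\mcX$ (Morse indices in the case of $\mcW$, homomorphisms between the corresponding Euclidean spaces in the case of $\mcV$), so the verification is essentially bookkeeping once the definitions are unwound.

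For $\mcW$ I would declare a $k$-morphism to be a column-array of the form $\left[\begin{smallmatrix} n_{k} & \cdots & n_0 \\ m_{k} & \cdots & m_0 \end{smallmatrix}\right]$ of non-negative integers, mirroring the iterated Morse-index data carried by a $k$-morphism of $\mcX$. The source and target maps would be the obvious truncations of such an array; the identity on a $(k-1)$-morphism would be given by prepending a column with equal upper and lower entries; and the $p$-fold composition $\circ_p$ would be given by concatenation along the $p$-th column whenever the relevant adjacent entries of the two arrays match. Since the data are just finite tuples of non-negative integers, associativity, unitality and the interchange of $\circ_p$ with $\circ_q$ for $p\neq q$ reduce to trivially true identities between such tuples.

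For $\mcV$ a $k$-morphism would consist of a tuple $(\phi_k,\dots,\phi_1)$ with $\phi_l\in\Hom(\R^{n_{l-1}},\R^{m_{l-1}})$, together with the underlying $\mcW$-data $\left[\begin{smallmatrix} n_{k} & \cdots & n_0 \\ m_{k} & \cdots & m_0 \end{smallmatrix}\right]$, so that there is a built-in forgetful map $\mcV\to\mcW$. Source and target are truncations of both the tuple and the array; identities are obtained by inserting the identity endomorphism of the appropriate $\R^j$; and $\circ_p$ acts on the $p$-th slot by composition of linear maps (whenever the matching conditions on the underlying indices are satisfied) and as the identity on the remaining slots. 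Strict associativity and unitality of $\circ_p$ then follow directly from those of composition of linear maps, and the interchange of $\circ_p$ with $\circ_q$ for $p\neq q$ is strict because these two operations touch disjoint slots of the tuple and hence literally commute.

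The place where care is really required, and what I expect to be the main obstacle, is the set-up itself: one must define the matching conditions for the compositions in $\mcV$ and $\mcW$ so faithfully parallel to those in $\mcX$ that no composable pair of morphisms of $\mcX$ has an image in $\mcV$ or $\mcW$ which fails to be composable. In particular, the degeneracy and truncation behaviour of an identity-prepended column must mesh with both the lower-level sources/targets and the domains of the higher compositions. Once this bookkeeping is in place, every axiom of \refalmoststrict holds on the nose — strictly associative, strictly unital and strictly interchanging — so the verification reduces to straightforward identities either between tuples of non-negative integers or between composites of linear maps.
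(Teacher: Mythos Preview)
Your proposal rests on a misreading of $\mcV$, and the composition laws you describe are not the ones the paper actually claims form an almost strict $n$-category. The $l$-cells of $\mcV$ are tuples of Hom-\emph{spaces}
\[
\bigl(\R^{i_l},\Hom(\R^{i_{l-1}},\R^{j_{l-1}}),\dots,\Hom(\R^{i_0},\R^{j_0})\bigr)
\]
subject to index inequalities, not tuples of specific linear maps $(\phi_k,\dots,\phi_1)$; the introduction already says ``tuples of spaces of linear maps'', and the functor $\mcF$ lands in a product of Hom-spaces, not in a single map. Accordingly, the paper's composite $\circ_p$ is not composition of linear maps in one slot: it \emph{adds} the exponents in every slot above level $p$ (replacing the pair $\Hom(\R^{i_k},\R^{j_k})$, $\Hom(\R^{\mu_k},\R^{\nu_k})$ by $\Hom(\R^{i_k+\mu_k},\R^{j_k+\nu_k})$ for $k>p$), replaces the level-$p$ data $(u_p,v_p)$, $(v_p,w_p)$ by $(u_p,w_p)$, and keeps the common data below $p$. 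Exactly the same addition of integers defines $\circ_p$ on $\mcW$; your ``concatenation along the $p$-th column'' does not match this and is left too vague to be a definition.

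Worse, your proposed $\circ_p$ for $\mcV$ is not even well-defined as stated: at slots $k>p$ the two $l$-cells being composed carry unrelated linear maps $\phi_k$ and $\psi_k$, and ``identity on the remaining slots'' gives no rule for producing a single map out of the pair. So the structure you describe does not return an $l$-cell, let alone the paper's $l$-cell. The actual verification hinges precisely on the additive nature of $\circ_p$: associativity and binary interchange reduce to associativity and commutativity of addition in $\N_0$ (for $\mcW$) and to the canonical isomorphisms $\R^{a}\times\R^{b}\simeq\R^{a+b}$ (for $\mcV$) --- and this last point is exactly why the resulting category is only \emph{almost} strict rather than strict, a feature your strictly-commuting-slots picture cannot account for.
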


This statement is proven in \refhomncat\ and \refWncat. For the definition of an almost strict $n$-category functor, we refer the reader to \refalmoststrict.

\begin{IntroTheorem}
There are almost strict $n$-category functors $\mcF : \mcX \to \mcV$ and $\mcG : \mcX \to \mcW$ which are based on the dimension of the Morse moduli spaces and the Morse index.
\end{IntroTheorem}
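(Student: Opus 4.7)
The plan is to construct both functors by induction on the categorical level $k$, exploiting the recursive construction of $\mcX=\mcX(F,G)$, and then verify the axioms of an almost strict $n$-category functor from \refalmoststrict\ level by level. On a $k$-morphism of $\mcX$, that is (a point of) $\mcMhat(x_k, y_k, f_{k[\cdot]})$, I would define $\mcG$ to return the index tuple $\left[\begin{smallmatrix} \Ind(x_k), \ldots, \Ind(x_0) \\ \Ind(y_k), \ldots, \Ind(y_0) \end{smallmatrix}\right]$ and $\mcF$ to return the corresponding tuple of $\Hom$ spaces; by \refhomncat\ and \refWncat\ the targets are already known to be almost strict $n$-categories, so only the functor axioms remain.

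First, I would verify globular (source/target) compatibility. A $k$-morphism in $\mcX$ has as source and target two $(k-1)$-morphisms obtained by discarding the top-level data $x_k$ and $y_k$; the source and target maps in $\mcV$ and $\mcW$ are the analogous operations of truncating the top entry of the tuple. Since $\mcF$ and $\mcG$ preserve all lower-level indices by construction, source and target manifestly commute with $\mcF$ and $\mcG$. Identity $k$-morphisms, which arise from degenerate moduli spaces with coinciding source and target $(k-1)$-morphism, are sent to tuples whose top-level source and target indices agree, i.e.\ to identities in $\mcV$ and $\mcW$.

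Next, I would check compatibility with each of the $j$-compositions, $0 \le j < k$. Composition in $\mcX$ at level $j$ is realised by the concatenation/gluing of Morse moduli spaces along boundary strata of the form $\mcMhat(\cdot,z)\times\mcMhat(z,\cdot)$ at level $j+1$; under this operation the shared critical-point index $\Ind(z)$ is what makes the two factors composable. On the $\mcW$-side, this is mirrored exactly by the rule identifying matching entries in two composable index tuples, so $\mcG$ intertwines $j$-composition. On the $\mcV$-side, the relevant entry of the tuple is composed via ordinary composition of linear maps $\Hom(\R^{\Ind(z)},\R^{\Ind(y)})\circ\Hom(\R^{\Ind(x)},\R^{\Ind(z)})$, while the remaining entries combine unchanged; this matches the composition rule in $\mcV$ built in \refhomncat. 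Thus $\mcF$ also respects $j$-composition.

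The main obstacle, as I foresee it, is the \emph{almost strict} laxity: the axioms of \refalmoststrict\ demand that certain equalities hold only up to distinguished higher-cell equivalences coming from reparametrisations and boundary identifications in $\mcX$. I would have to show that such equivalences are sent to identities by $\mcF$ and $\mcG$; this should hold automatically because the Hom spaces and index tuples in $\mcV$ and $\mcW$ depend only on the Morse indices of the endpoints, which are invariant under the equivalences involved. A secondary bookkeeping point is that the dimension count $\dim\mcMhat(x_k,y_k)=\Ind(x_k)-\Ind(y_k)-1$ must be consistent at every level of the iteration, but this is already built into the definition of $\mcX$ and is not sensitive to the auxiliary choices $F$ and $G$, so it does not pose a genuine difficulty. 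The detailed verifications will then be carried out as \refhomfunctor\ and \refwfunctor.
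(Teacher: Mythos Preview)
Your outline has the right shape at the low levels (source/target and identities), but it contains a genuine error in how you describe composition, both in $\mcX$ and in $\mcV$, $\mcW$. You write that under $j$-composition ``the relevant entry of the tuple is composed via ordinary composition of linear maps \dots, while the remaining entries combine unchanged.'' This is not what happens. In $\mcX$, composing two $l$-cells along a $p$-cell does \emph{not} leave the levels $k>p$ untouched: the resulting $l$-cell carries the critical points $(a_k,c_k)$ on the \emph{product} moduli spaces $\mcMhat(a_{k-1},b_{k-1},\dots)\times\mcMhat(c_{k-1},d_{k-1},\dots)$, and the Morse index of such a pair is $\Ind(a_k)+\Ind(c_k)$. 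Correspondingly, the composite in $\mcV$ and $\mcW$ is defined so that the entries at levels $k>p$ \emph{add}: one gets $\Hom(\R^{i_k+\mu_k},\R^{j_k+\nu_k})$, respectively the integer $i_k+\mu_k$, not the old entries unchanged. Only the levels $k<p$ are unchanged (they already agree), and at level $p$ the matching $\Hom(\R^{u_p},\R^{v_p})$ and $\Hom(\R^{v_p},\R^{w_p})$ collapse to $\Hom(\R^{u_p},\R^{w_p})$. Note also that the elements of $V(l)$ are tuples of Hom \emph{spaces}, not specific linear maps, so speaking of ``ordinary composition of linear maps'' is misleading; nothing in $\mcV$ ever composes two chosen homomorphisms.

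Because of this, your verification that $\mcF$ and $\mcG$ intertwine $\circ_p$ would fail as written: you would be comparing $\mcF(C_l\circ_p A_l)$, whose entries above level $p$ are sums of indices, with $\mcF(C_l)\circ_p\mcF(A_l)$ as you have (mis)described it, where those entries are left alone. The actual check in the paper is the additive one, and once you use the correct composition rule it is a one-line computation. Your discussion of ``almost strict laxity'' is also somewhat off target: in this paper an almost strict $n$-functor is simply a globular map commuting with $\circ_p$ and $\mathbf{1}$; there is no separate coherence datum to transport, so there is nothing further to check beyond the three compatibilities and the index inequalities defining $V(l)$ and $W(l)$.
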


This is proven in \refhomfunctor\ and \refwfunctor.

\subsection*{Organization of the paper}

In Section 2, we recall notions and definitions associated to almost strict $n$-categories. In Section 3, we briefly sketch the idea of Morse moduli spaces on smooth manifolds and on manifolds with corners before we recall the almost strict Morse $n$-category $\mcX$ from Hohloch \cite{hohloch}. In section 4, we construct the almost strict $n$-categories $\mcV$ and $\mcW$ and define the $n$-category functors $\mcF : \mcX \to \mcV$ and $\mcG : \mcX \to \mcW$. In Section 5, we compute (an example of) $\mcX$ and its image under the functors on the 2-torus.


\section{Almost strict $n$-categories}

Strict $n$-categories were originally introduced by Ehresmann. We will use the formulation and conventions of Leinster's book \cite{leinster}.

\begin{Definition}
\label{nglobular}
Given $n \in \N$, we define an \textbf{$n$-globular set} $Y$ to be a collection of sets $\{Y(l) \mid {0 \leq l \leq n} \}$ together with \textbf{source} and \textbf{target functions} $s$, $t: Y(l) \to Y(l-1)$ for $1 \leq l \leq n$ satisfying $s \circ s = s \circ t$ and $t \circ s = t \circ t$. Elements $A_l \in Y(l)$ are called \textbf{$l$-cells}.
\end{Definition}

To visualize $n$-globular sets, one can think of $l$-cells as $l$-dimensional disks like in Figure \ref{cells}: (a) shows a $0$-cell $A_0 \in Y(0)$, (b) displays a $1$-cell $A_1 \in Y(1)$ with $s(A_1)=A_0 \in Y(0)$ and $t(A_1)=B_0 \in Y(0)$, (c) sketches a $2$-cell $A_2 \in Y(2)$ with $s(A_2)=A_1$, $t(A_2)=B_1 \in Y(1)$ and therefore $s(A_1)=s(B_1)=A_0$ and $t(A_1)=t(B_1)=B_0$.

\begin{figure}[h]
\begin{center}

\input{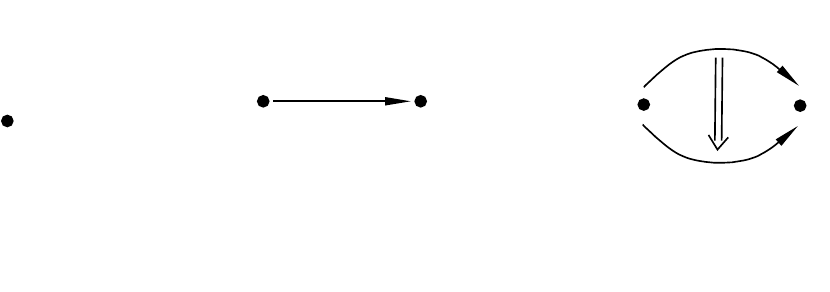_t}

\caption{(a) $0$-cell, (b) $1$-cell, (c) $2$-cell.}
\label{cells}

\end{center}
\end{figure}

If we want to compose two $l$-cells $\mfx$ and $\mfxti$ along a $p$-cell, we need certain matching conditions which are described by the set
\beqs
Y(l) \x_p Y(l) := \{(\mfxti, \mfx) \in Y(l) \x Y(l) \mid s^{l-p}(\mfxti)=t^{l-p}(\mfx)\}
\eeqs
where $0 \leq p < l \leq n$.

\begin{Definition}
\label{ncategory}
Let $n \in \N$. A \textbf{strict $n$-category $\mcY$} is an $n$-globular set $Y$ equipped with
\begin{itemize}
 \item 
a function $\circ_p: Y(l) \x_p Y(l) \to Y(l)$ for all $0 \leq p < l\leq n$. We set $\circ_p(C_l, A_l) =:C_l \circ_p A_l$ and call it \textbf{composite} of $A_l$ and $C_l$.
\item
a function $\mathbf{1}: Y(l) \to Y(l+1) $ for all $0 \leq l < n$. We set $\mathbf{1}_{A_l}:=\mathbf{1}(A_l)$ and call it the \textbf{identity} on $A_l$.
\end{itemize}
These have to satisfy the following axioms:
\begin{enumerate}[(a)]
 \item 
\textbf{(Sources and targets of composites)} For $0 \leq p < l \leq n$ and $(C_l, A_l) \in Y(l) \x_p Y(l) $ we require
\begin{align*}
& \mbox{for } p=l-1: \ && s(C_l \circ_p A_l)=s(A_l) && \mbox{and} && t(C_l \circ_p A_l) = t(C_l), \\ 
& \mbox{for } p \leq l-2: \ && s(C_l \circ_p A_l)= s(C_l) \circ_p s(A_l) && \mbox{and} && t(C_l \circ_p A_l)=t(C_l) \circ_p t(A_l).
\end{align*}
\item
\textbf{(Sources and targets of identities)} For $0 \leq l <  n$ and $A_l \in Y(l)$ we require 
\beqs
s(\mathbf{1}_{A_l})=A_l=t(\mathbf{1}_{A_l}).
\eeqs
\item
\textbf{(Associativity)}  
For $0 \leq p < l \leq n$ and $A_l$, $C_l$, $E_l\in Y(l)$ with $(E_l, C_l)$, $(C_l, A_l) \in Y(l) \x_p Y(l)$ we require
\beqs
(E_l\circ_p C_l) \circ_p A_l = E_l \circ_p( C_l \circ_p A_l).
\eeqs 
\item \textbf{(Identities)}
For $0 \leq p < l \leq n$ and $A_l \in Y(l)$ we require 
\beqs
\mathbf{1}^{l-p}(t^{l-p}(A_l)) \circ_p A_l = A_l = A_l \circ_p \mathbf{1}^{l-p}(s^{l-p}(A_l)).
\eeqs
\item \textbf{(Binary interchange)}
For $0 \leq q < p < l \leq n$ and $A_l$, $C_l$, $E_l$, $H_l \in Y(l)$ with
\beqs
(H_l, E_l), (C_l, A_l) \in Y(l) \x_p Y(l) \mbox{ and } (H_l, C_l), (E_l,A_l) \in Y(l)\x_q Y(l)
\eeqs
we require
\beqs
(H_l \circ_p E_l) \circ_q (C_l \circ_p A_l) = (H_l \circ_q C_l) \circ_p (E_l \circ_q A_l).
\eeqs
\item \textbf{(Nullary interchange)}
For $0 \leq p < l< n$ and $(C_l, A_l) \in Y(l) \x_p Y(l)$ we require $\mathbf{1}_{C_l} \circ_p \mathbf{1}_{A_l} = \mathbf{1}_{C_l \circ_p A_l}$.
\end{enumerate}
If $\mcY$ and $\mcZ$ are strict $n$-categories we define a \textbf{strict $n$-functor $f$} as a map $f : Y \to Z$ of the underlying $n$-globular sets commuting with composition and identities. This defines a category \textbf{Str-$n$-Cat} of strict $n$-categories.
\end{Definition}

If we slightly relax the requirements, we get

\begin{Definition}
\label{almoststrict}
An \textbf{almost strict $n$-category} satisfies the requirements of a strict $n$-category up to canonical isomorphism.
Let $\mcA$ and $\mcB$ be two almost strict $n$-categories with $n$-globular sets $A$ and $B$. An \textbf{\textit{almost strict $n$-category functor}}, briefly an \textbf{\textit{$n$-functor}}, $\mcF : \mcA \to \mcB$ is a map $\mcF : A \to B$ of the underlying $n$-globular sets commuting with composition and identities. This defines the category $\mcC$ of almost strict $n$-categories.
\end{Definition}


\section{The almost strict Morse $n$-category}


\subsection{Morse moduli spaces on smooth manifolds without boundary}

In the following, we are interested in the dynamical approach to Morse theory via the negative gradient flow of a Morse function as described for instance by Schwarz \cite{schwarz}.

\vsp 

Let $M$ be a closed $m$-dimensional manifold. A smooth function $f : M \to \R$ is a {\em Morse function} if its Hessian $D^2f$ is nondegenerate at the critical points $\Crit(f):=\{ x \in M \mid Df(x)=0\}$. The {\em Morse index} $\Ind(x)$ of a critical point $x$ is the number of negative eigenvalues of $D^2f(x)$. Given a Riemannian metric $g$ on M, we denote by $\grad_{g}f$ the gradient of $f$ w.r.t. the metric $g$.
The autonomous ODE of the {\em negative gradient flow} $\phi_t$ of the pair $(f,g)$ is given by
\beqs
\phidot_t = - \grad_g f(\phi_t).
\eeqs
The {\em stable manifold} of a critical point $x \in \Crit(f)$ is
\beqs
W^s(f,x):= W^s(f,g,x):= \{ p \in M \mid  \lim_{t \to + \infty} \phi_t(p)=x\}
\eeqs
and the {\em unstable manifold} is
\beqs
W^u(f,x):= W^u(f,g,x):= \{ p \in M \mid \lim_{t \to - \infty} \phi_t(p) =x\}.
\eeqs
A pair $(f,g)$ is called {\em Morse-Smale} if $W^s(f,g,x)$ and $W^u(f,g,y)$ intersect transversely for all $x$, $y \in \Crit(f)$. 
The {\em Morse moduli space} between two critical points $x$ and $y$ is the space of smooth curves
\beqs
\mcM(x,y):= \mcM(x,y, f,g):= 
\left\{
 \ga: \R \to M \left|
\begin{aligned}
 & \gadot(t) = - \grad_g f(\ga(t)), \\ 
 & \lim_{t \to - \infty} \ga(t)=x , \\ 
 & \lim_{t \to + \infty} \ga(t)=y
\end{aligned}
\right.
\right\}.
\eeqs
This are the negative gradient flow lines running from $x$ to $y$. It can also be identified with $W^u(x,f) \cap W^s(f,y)$. 
If $(f,g)$ is Morse-Smale $\mcM(x,y)$ is a smooth manifold of dimension $\Ind(x)- \Ind(y)$. If $\Ind(y)>\Ind(x)$ then the space $\mcM(x,y)$ is empty.
For $\ga \in \mcM(x,y)$ and $\si \in \R$, the curve $\ga_\si$ with $\ga_\si(t):= \ga(t+\si)$ is also a gradient flow line. Thus there is an action $\R \x \mcM(x,y) \to \mcM(x,y)$, $(\ga, \si) \mapsto \ga_\si$. Dividing by the action, we obtain the {\em unparametrized} moduli space $\mcM(x,y)\slash \R$. 

\vsp

In order to describe Morse moduli spaces properly we need the notion of a manifold with corners. An overview over the various definitions of manifolds with corners and their differences may be found in Joyce \cite{joyce} whose conventions we will use. An \emph{$m$-dimensional manifold with corners} is an $m$-dimensional manifold which is locally modeled on $\R^m_+:=(\R_{\geq 0})^m$. Let $\psi=(\psi_1, \dots, \psi_m) : U \subseteq N \to \R^m_+$ be a chart of an $m$-dimensional manifold with corners $N$.
For $x \in U$, set
\beqs
depth(x):=\#\{i \mid \psi_i(x)=0,\ 1 \leq i \leq m \}.
\eeqs
A {\em face} of $N$ is the {\em closure} of a connected component of $\{x \in N \mid depth(x)=1\}$. 
If $k$ is the number of faces, we fix an {\em order} of the faces and denote them by $\del_1 N, \dots, \del_k N$. 
The connected components of $\{x \in N \mid depth(x)=l\}=:D_{\dim N -l}$ are called the {\em ($\dim N -l$)-strata} of $N$.

\begin{Definition}
Let $N$ be an $m$-dimensional manifold with corners with $k$ faces $\del_1 N, \dots, \del_k N$. We call $N$ a \textbf{$\langle k \rangle$-manifold} if 
\begin{enumerate}[(a)]
 \item 
Each $x \in N$ lies in $depth(x)$ faces.
\item
$\del_1 N \cup \dots \cup \del_k N = \del N$.
\item
For all $1 \leq i,j \leq k$ with $i \neq j$ the intersection $\del_i N \cap \del_j N $ is a face of both $\del_i N$ and $\del_j N$. 
\end{enumerate}
\end{Definition}

Here $\del_i N \subset N$ is again a manifold with corners, but $\del N$ is not. We stick to Joyce's \cite{joyce} definition where the integer $\langle k \rangle$ has a priori nothing to do with the dimension $m$ of the manifold $N$. Other authors like Laures \cite{laures} let $\del_i N$ be a union of faces which admits $k= \dim N$.
An example of a $\langle k \rangle$-manifold is $\R^k_+$ with faces $\del_i\R^k_+:= \{ x \in \R^k_+ \mid x_i=0\}$. $\langle 0 \rangle$-manifolds are manifolds without boundary and $\langle 1 \rangle$-manifolds are manifolds with one (smooth) boundary component.

\vsp

Now let $M$ be a smooth compact manifold with Morse-Smale pair $(f,g)$ and $x$, $y$, $z \in \Crit(f)$ with $\Ind(x) > \Ind(y) > \Ind(z)$. 
Figure \ref{breaking} (a) displays a sequence of trajectories $(\ga_n)_{n \in \N}$ from $x$ to $z$ which `break' in the limit into trajectories $\ga_{xy}$ from $x$ to $y$ and $\ga_{yz}$ from $y$ to $z$. This phenomenon is called `breaking' and plays an important role if one wants to compactify unparametrized Morse moduli spaces.
More precisely, one usually compactifies an unparametrized moduli space by adding `broken trajectories' as boundary points. We denote this compactification of $\mcM(x,z)\slash \R$ via adding broken trajectories by $\mcMhat(x,z):= \overline{\mcM(x,z)\slash \R}$. 
In order to obtain a nice structure on the compactification one needs to pose conditions on the metric.
If $f$ is a Morse function and if a metric $g$ is euclidean near the critical points of $f$ the we call $g$ an {\em $f$-euclidean metric}.

\begin{figure}[h]

\begin{center}

\input{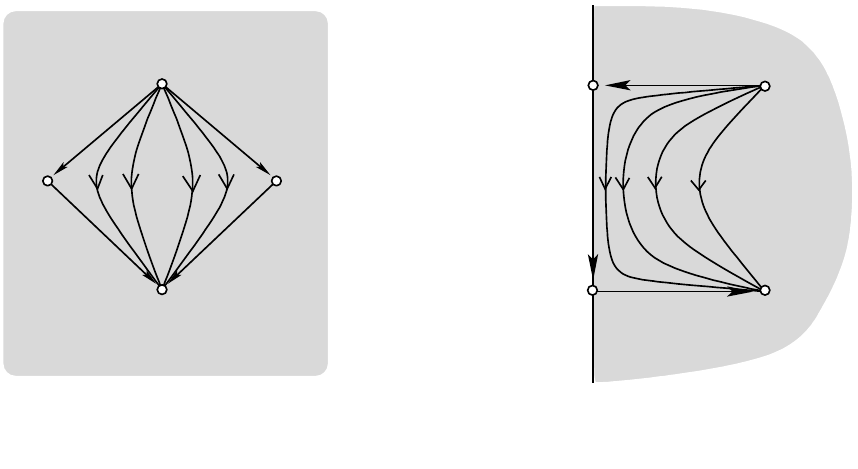_t}
\caption{Breaking of trajectories: (a) in the interior, (b) on the boundary.}
\label{breaking}
\end{center}

\end{figure}

For $x$, $y \in \Crit(f)$ with $x \neq y$, we introduce the notation $x>y$ if $\mcM(x,y) \neq \emptyset$.

\begin{Theorem}[\cite{burghelea}, \cite{wehrheim}, \cite{qin1}, \cite{qin2}]
\label{modulispacecompact}
Let $M$ be compact and $(f,g)$ be Morse-Smale and assume $g$ to be $f$-euclidean. Let $x$, $z \in \Crit(f)$ with $x>z$. Then there exists $k \in \N_0$ such that $\mcMhat(x,z)$ is an $(\Ind(x)-\Ind(z)-1)$-dimensional $\langle k \rangle$-manifold with corners and its boundary is given by 
\beqs
\del \mcMhat(x,z) = \bigcup_{\stackrel{ (\Ind(x) - \Ind(z) -1) \geq l \geq 0}{x > y_1 > \dots > y_l >z}} \mcMhat(x,y_1) \x\mcMhat(y_1, y_2) \x \dots \x \mcMhat(y_{l-1}, y_l) \x \mcMhat(y_l, z)
\eeqs
where $y_1$, \dots, $y_l \in \Crit(f)$. There is a canonical smooth structure on $\mcMhat(x,z)$.
\end{Theorem}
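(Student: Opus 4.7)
The plan is to combine three classical ingredients: transversality for smoothness of the open stratum, Palais--Smale compactness for identifying the boundary with broken trajectories, and gluing analysis for the manifold-with-corners structure.

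First, I would observe that $\mcM(x,z) \cong W^u(f,x) \cap W^s(f,z)$. The Morse--Smale hypothesis guarantees transverse intersection, so $\mcM(x,z)$ is a smooth submanifold of dimension $\Ind(x) + (m - \Ind(z)) - m = \Ind(x) - \Ind(z)$. The $\R$-action by time translation is free (since $x \neq z$ forces non-constancy) and smooth, hence $\mcM(x,z)/\R$ is a smooth manifold of dimension $\Ind(x) - \Ind(z) - 1$.

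Next, I would compactify by adjoining broken trajectories. Along any sequence in $\mcM(x,z)$ the energy $\int \abs{\gadot}^2 \, dt = f(x) - f(z)$ is uniformly bounded, and the Morse property forces the $\al$- and $\om$-limits of any $C^\infty_{loc}$-limit to lie in $\Crit(f)$. A standard Arzel\`a--Ascoli argument combined with successive time reparametrizations (one for each intermediate critical value eventually visited) shows that any sequence has a subsequence converging to a broken trajectory $\underline{\ga} = (\ga_1, \dots, \ga_l)$ with $\ga_i \in \mcM(y_{i-1}, y_i)/\R$ and $x = y_0 > y_1 > \dots > y_l = z$. The $f$-euclidean assumption enters here to give exponential decay near each $y_i$, uniformly bounding the time spent in small neighbourhoods of critical points.

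Third, I would furnish corner charts via the classical gluing construction. Given a broken trajectory $\underline{\ga} = (\ga_1, \dots, \ga_l)$, one pre-glues with large shift parameters $(\rho_1, \dots, \rho_{l-1}) \in [R,\infty)^{l-1}$ and applies the implicit function theorem in weighted Sobolev spaces to correct the pre-glued curve to an honest flow line $\ga_1 \#_{\rho_1} \cdots \#_{\rho_{l-1}} \ga_l$. Setting $\tau_i := e^{-\rho_i}$ yields a smooth chart from an open subset of $\mcMhat(x,y_1) \times \cdots \times \mcMhat(y_{l-1},z) \times [0,\ep)^{l-1}$ into a neighbourhood of $\underline{\ga}$ in $\mcMhat(x,z)$. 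Arguing by induction on $\Ind(x) - \Ind(z)$, the factors may be assumed to be manifolds with corners already, so the product is, and the gluing charts together with the open stratum equip $\mcMhat(x,z)$ with a canonical smooth manifold-with-corners structure of the asserted dimension; the boundary formula reads off directly from the parameterization.

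Finally, I would verify the $\langle k \rangle$-axioms, with faces indexed by the critical points $y$ satisfying $x > y > z$. The gluing chart identifies the depth of $\underline{\ga}$ with its number of breaks, so a trajectory with $j$ breaks lies in exactly $j$ codimension-one faces, giving axiom (a); axiom (b) is the boundary formula; and axiom (c) follows because an intersection $\del_y \mcMhat(x,z) \cap \del_{y'} \mcMhat(x,z)$ corresponds to trajectories breaking at both $y$ and $y'$, which unfolds as a face of each via the product structure. The main obstacle will be the gluing step: producing a chart that is \emph{smooth} in the gluing parameter (and not merely continuous, as the pre-glued curve is) requires the $f$-euclidean hypothesis and delicate exponential-decay estimates to run the implicit function theorem in the appropriate weighted function spaces.
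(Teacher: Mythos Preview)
The paper does not supply its own proof of this theorem: it is quoted as a result of Burghelea, Wehrheim, and Qin, with only a brief remark afterwards that gluing is the inverse of breaking and is associative. So there is no in-paper argument to compare against; the relevant benchmark is the cited literature.

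Your outline is the standard one found in those references: smoothness of the open stratum from Morse--Smale transversality, compactness by broken trajectories via energy bounds and Arzel\`a--Ascoli, and corner charts from gluing with exponential reparametrization $\tau_i = e^{-\rho_i}$, proceeding by induction on index difference. You also correctly locate the genuine difficulty, namely that smoothness (not mere continuity) of the gluing map in the parameters requires the $f$-euclidean hypothesis and careful weighted estimates; this is exactly the content of the Wehrheim and Qin papers.

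One small inaccuracy: you propose to index the faces by the intermediate critical points $y$ with $x>y>z$. In general $\mcMhat(x,y)\times\mcMhat(y,z)$ can have several connected components, and the $\langle k\rangle$-structure is obtained by declaring each connected component of the depth-one locus to be a face (the paper makes this explicit just after restating the theorem). So $k$ counts connected components of once-broken strata, not intermediate critical points, and your verification of axioms (a)--(c) should be phrased accordingly. With that adjustment your sketch is sound.
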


The `inverse procedure' of breaking is `gluing' which takes a broken trajectory $(\ga_{xy}, \ga_{yz}) \in \mcMhat(x,y) \x \mcMhat(y,z)$ and `glues' it to a Morse trajectory from $x$ to $z$.
Gluing multiply broken trajectory $(\ga_1, \dots, \ga_{l+1}) \in \mcMhat(x,y_1) \x \dots \x \mcMhat(y_l, z)$ is well defined since gluing is associative  (cf.\ Qin \cite{qin3} and Wehrheim \cite{wehrheim}).


\subsection{Morse moduli spaces on $\langle k \rangle$-manifolds}

\label{modulicorners}

For manifolds with smooth boundary, there is a Morse theory approach via the gradient flow tangential to the boundary (cf.\ Akaho \cite{akaho}, Kronheimer $\&$ Mrowka \cite{kronheimer-mrowka}). Ludwig \cite{ludwig} defined Morse theory with tangential gradient vector field on stratified spaces.

\vsp

Let $M$ be a smooth compact manifold. 
Let $(f_0, g_0)$ be a Morse-Smale pair consisting of a Morse function $f_0$ with $f_0$-euclidean metric $g_0$. Let $x_0$, $z_0 \in \Crit(f_0)$ be distinct critical points and consider $\mcMhat(x_0, z_0, f_0)$. If this moduli space is not empty then, by \refmodulispacecompact, it is a manifold (possibly) with corners whose boundary is of the form
\beqs
\del \mcMhat(x_0,z_0,f_0) = \bigcup_{\stackrel{ (\Ind(x_0) - \Ind(z_0) -1) \geq l \geq 0}{x_0 > y^1_0 > \dots > y^l_0 >z_0}} \mcMhat(x_0,y^1_0, f_0) \x \dots \x \mcMhat(y^l_0, z_0, f_0)
\eeqs
where $y^1_0$, \dots, $y^l_0 \in \Crit(f_0)$. Using this formula recursively we can also write
\beqs
\del \mcMhat(x_0, z_0, f_0)= \bigcup_{y_0 \in \Crit(f_0)} \mcMhat(x_0, y_0, f_0) \x \mcMhat(y_0, z_0, f_0).
\eeqs
A moduli space may have several connected components. Choosing an ordering for the components of depth one, we endow $\mcMhat(x_0, z_0, f_0)$ with the structure of a $\langle k \rangle $-manifold for some $k \in \N_0$. Note that $\mcMhat(x_0, z_0, f_0)$ might share strata with other moduli spaces $\mcMhat(\xti_0, \zti_0, f_0)$ for $\xti_0$, $\zti_0 \in \Crit(f_0)$.

\begin{Theorem}[Hohloch \cite{hohloch}]
\label{morseOnManifoldsWithCorners}
Let $f$ be a Morse function on a compact $\langle k \rangle$-manifold whose negative gradient flow is tangential to the boundary strata and flows from higher to lower strata, but not from lower to higher ones. Assume the metric to be euclidean near the critical points. 
 Let $x$, $z \in \Crit(f)$ with $x>z$. Then there exists $k \in \N_0$ such that $\mcMhat(x,z)$ is an $(\Ind(x)-\Ind(z)-1)$-dimensional $\langle k \rangle$-manifold with corners and its boundary is given by 
\beqs
\del \mcMhat(x,z) = \bigcup_{\stackrel{ (\Ind(x) - \Ind(z) -1) \geq l \geq 0}{x > y_1 > \dots > y_l >z}} \mcMhat(x,y_1) \x\mcMhat(y_1, y_2) \x \dots \x \mcMhat(y_{l-1}, y_l) \x \mcMhat(y_l, z)
\eeqs
where $y_1$, \dots, $y_l \in \Crit(f)$. There is a canonical smooth structure on $\mcMhat(x,z)$.
\end{Theorem}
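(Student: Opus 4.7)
The plan is to reduce the statement to \refmodulispacecompact\ by exploiting the tangency hypothesis, proceeding by induction on the depth $k$ of the $\langle k \rangle$-structure; the base case $k = 0$ is exactly \refmodulispacecompact.

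First I would unpack the geometric content of the two hypotheses. In $f$-euclidean collar coordinates $(x_1, \dots, x_k, x')$ near a corner, where $\{x_i = 0\}$ defines the $i$-th face, tangency of $\grad f$ to each face is equivalent to $\del_{x_i} f = 0$ on $\{x_i = 0\}$, so $f$ admits the normal expansion $f = f|_{\mathrm{face}} + \tfrac{1}{2} a_i(x')\, x_i^2 + O(x_i^3)$. The hypothesis that the flow runs from higher to lower strata forces $a_i(x') > 0$, from which I would deduce three structural facts: every $p \in \Crit(f)$ lies in a unique open stratum $D(p)$; it is also a critical point of $f|_{\overline{D(p)}}$ with $\Ind_f(p) = \Ind_{f|_{\overline{D(p)}}}(p)$, because the normal directions contribute only positive eigenvalues to $D^2 f(p)$; and $W^u(p) \subseteq \overline{D(p)}$ while $W^s(p)$ extends transversally into every higher-dimensional stratum whose closure contains $D(p)$.

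Next, given $x, z \in \Crit(f)$ with $x > z$, I would argue that every $\ga \in \mcM(x, z, f)$ lies entirely in the single open stratum $D(x)$: tangency keeps $\ga$ in the stratum of $\ga(0)$, and the structural facts above identify that stratum as $D(x)$ and force $D(z) \subseteq \overline{D(x)}$. Hence $\mcM(x, z, f) = W^u(x) \pitchfork W^s(z)$ is a smooth manifold of dimension $\Ind(x) - \Ind(z)$ by Morse-Smale transversality, exactly as in the closed case. For the compactification I would run the standard convergence argument for negative gradient flow lines on the compact $N$ to extract a limiting broken trajectory $(\ga_1, \dots, \ga_{l+1}) \in \mcM(x, y_1) \times \cdots \times \mcM(y_l, z)$; invariance of strata under the flow, combined with the monotonicity of $f$, then forces the intermediate critical points $x > y_1 > \cdots > y_l > z$ to proceed through strata of non-increasing dimension, yielding the set-theoretic description of $\del \mcMhat(x, z)$.

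The hard part will be promoting this set-theoretic decomposition to a genuine $\langle k \rangle$-manifold with corners structure, i.e.\ carrying out the local gluing analysis near a broken trajectory whose successive pieces live in strata of different dimensions. My plan is to perform the pre-gluing and Newton iteration directly in the collar coordinates above: positivity of $a_i(x')$ implies that the linearized operator has trivial cokernel in the normal directions, and the gluing parameter $\rh$ pairs naturally with the face-defining functions, so that the glued trajectory sits in $D(x)$ for $\rh > 0$ and limits to the broken trajectory as $\rh \to 0$. Combined with the inductive hypothesis applied to each face $\del_i N$ (itself a compact $\langle k_i \rangle$-manifold with $k_i < k$ on which $f|_{\del_i N}$ satisfies the same tangency and direction hypotheses), this produces the canonical smooth structure and the corner description of $\mcMhat(x, z)$, closing the induction.
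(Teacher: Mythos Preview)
The paper does not prove this theorem. It is stated with the attribution ``Hohloch \cite{hohloch}'' and imported as a black box from the earlier work; no proof, sketch, or argument appears in the present paper. There is therefore nothing here to compare your proposal against.

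That said, your outline follows the expected strategy for this kind of result: use tangency plus the ``flows from higher to lower strata'' hypothesis to localize each unbroken trajectory in a single open stratum, reduce the transversality and dimension count to the closed case (\refmodulispacecompact), and then do the gluing analysis in collar coordinates to obtain the corner structure, inducting on the depth of the stratification. This is consistent with how such theorems are typically established (and presumably with how \cite{hohloch} proceeds), though the genuinely delicate step --- gluing across strata of different dimensions and verifying associativity to get a canonical smooth structure --- is only gestured at in your last paragraph; in a full proof that is where most of the work lies, and it would need the sort of careful analysis in \cite{wehrheim}, \cite{qin3} adapted to the stratified setting.
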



\subsection{The almost strict Morse $n$-category}

\label{subsectionmorsencat}

In this subsection, we assume that all Morse functions satisfy:
\begin{enumerate}[1)]
 \item 
Their gradient vector field is tangential to the boundary strata.
  \item
The Morse function is compatible with the cartesian product structure of the boundary of a Morse moduli space.
  \item
The negative gradient flow only flows from higher dimensional into lower dimensional strata, but never from lower to higher dimensional strata, i.e.\ a behaviour like in Figure \ref{breaking} (b) is prevented. 
\end{enumerate}
For the existence and construction of such Morse functions, we refer to Hohloch \cite{hohloch}.

\vsp

In the following, we summarize the construction of the almost strict Morse $n$-category from the earlier work Hohloch \cite{hohloch}.
Let $M$ be a compact $n$-dimensional $\langle k \rangle$-manifold $M$ with a Morse function $f_0$ and an $f_0$-euclidean metric $g_0$. We set 
\beqs
X(0):=\{x_0 \mid x_0 \in \Crit(f_0)\}.
\eeqs
Let $x_0$, $y_0 \in \Crit(f_0)$ and choose on the space $\mcMhat(x_0, y_0, f_0)$ a Morse function
$f_{1 \left[ \begin{smallmatrix} x_0 \\ y_0 \end{smallmatrix}\right]}$ with $f_{1 \left[ \begin{smallmatrix} x_0 \\ y_0 \end{smallmatrix}\right]}$-euclidean metric $g_{1 \left[ \begin{smallmatrix} x_0 \\ y_0 \end{smallmatrix}\right]}$. We define
\beqs
X(1):=\{(x_1, \mcMhat(x_0, y_0, f_0)) \mid x_0, y_0 \in \Crit(f_0), \ x_1 \in \Crit(f_{1 \left[ \begin{smallmatrix} x_0 \\ y_0 \end{smallmatrix}\right]})\}.
\eeqs
The index of the Morse function $f_{1 \left[ \begin{smallmatrix} x_0 \\ y_0 \end{smallmatrix}\right]}$ or metric $g_{1 \left[ \begin{smallmatrix} x_0 \\ y_0 \end{smallmatrix}\right]}$ starts with the number of the `iteration level' on which the function or metric lives and continues with the (history of) critical points which gave rise to the moduli space. The upper row contains the source points and the lower row the target points. To remember the `history' of a moduli space is essential.
Iterating leads to 
\beqs
X(l):=
\left\{
\left(x_l, \mcMhat(x_{l-1}, y_{l-1}, 
f_{l-1 
\left[
\begin{smallmatrix}
x_{l-2}, \dots, x_0 \\
y_{l-2}, \dots, y_0
\end{smallmatrix}
\right]
}
)\right)
\left|
\begin{aligned}
& 0 \leq j \leq l-1, \\
& x_j, y_j \in \Crit(
f_{j 
\left[
\begin{smallmatrix}
x_{j-1}, \dots, x_0 \\
y_{j-1}, \dots, y_0
\end{smallmatrix}
\right]
}
), \\
& x_l \in \Crit(f_
{l 
\left[
\begin{smallmatrix}
x_{l-1}, \dots, x_0 \\
y_{l-1}, \dots, y_0
\end{smallmatrix}
\right]
}
)
\end{aligned}
\right.
\right\}
\eeqs
for $2 \leq l \leq n$.
We define {\em source} and {\em target functions} 
\beqs
s: X(l) \to X(l-1) \qquad \mbox{and} \qquad t: X(l) \to X(l-1)
\eeqs
for $2 \leq l \leq n$ via
\begin{align*}
s\left(x_l, \mcMhat(x_{l-1}, y_{l-1}, 
f_{l-1 
\left[
\begin{smallmatrix}
x_{l-2}, \dots, x_0 \\
y_{l-2}, \dots, y_0
\end{smallmatrix}
\right]
}
)\right)
&:= 
\left(x_{l-1}, \mcMhat(x_{l-2}, y_{l-2}, 
f_{l-2 
\left[
\begin{smallmatrix}
x_{l-3}, \dots, x_0 \\
y_{l-3}, \dots, y_0
\end{smallmatrix}
\right]
}
)\right), \\
t\left(x_l, \mcMhat(x_{l-1}, y_{l-1}, 
f_{l-1 
\left[
\begin{smallmatrix}
x_{l-2}, \dots, x_0 \\
y_{l-2}, \dots, y_0
\end{smallmatrix}
\right]
}
)\right)
&:= 
\left(y_{l-1}, \mcMhat(x_{l-2}, y_{l-2}, 
f_{l-2 
\left[
\begin{smallmatrix}
x_{l-3}, \dots, x_0 \\
y_{l-3}, \dots, y_0
\end{smallmatrix}
\right]
}
)\right) \\
\end{align*}
and set for $s, t: X(1) \to X(0)$ 
\beqs
s\left (a_1, \mcMhat(x_0, y_0, f_0)\right ):= x_0 \quad \mbox{and} \quad t\left (a_1, \mcMhat(x_0, y_0, f_0)\right):= y_0.
\eeqs

\begin{Lemma}[Hohloch \cite{hohloch}]
$X:=\{X(l) \mid 0 \leq l \leq n\}$ is an $n$-globular set.
\end{Lemma}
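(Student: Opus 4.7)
The plan is to unpack the two globular axioms $s \circ s = s \circ t$ and $t \circ s = t \circ t$ directly from the recursive definitions of $X(l)$, $s$ and $t$. The essential observation is that an $l$-cell records not only a critical point but also its entire history of ancestor critical points and Morse functions, and this history is stored symmetrically in the upper (source) and lower (target) rows of the index bracket. Since $x_{l-1}$ and $y_{l-1}$ are, by the very definition of $X(l)$, critical points of the \emph{same} Morse function $f_{l-1[\cdots]}$, they automatically share the same earlier data.

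First I would fix $l \geq 2$ and an arbitrary $l$-cell $\mfx = (x_l, \mcMhat(x_{l-1}, y_{l-1}, f_{l-1[\cdots]})) \in X(l)$. By definition of $X(l)$, both $x_{l-1}$ and $y_{l-1}$ lie in $\Crit(f_{l-1[\cdots]})$, so they belong to one and the same Morse function; hence the full history $(x_{l-2}, y_{l-2}, f_{l-2[\cdots]}), \dots, (x_0, y_0, f_0)$ carried in the bracket is identical for the two of them. Unpacking the given formulas yields $s(\mfx) = (x_{l-1}, \mcMhat(x_{l-2}, y_{l-2}, f_{l-2[\cdots]}))$ and $t(\mfx) = (y_{l-1}, \mcMhat(x_{l-2}, y_{l-2}, f_{l-2[\cdots]}))$, and the crucial point is that the moduli-space components of these two $(l-1)$-cells coincide.

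Next I would push one level further down. For $l \geq 3$, applying $s$ or $t$ to the $(l-1)$-cells $s(\mfx)$ and $t(\mfx)$ strips the first coordinate and substitutes the moduli space $\mcMhat(x_{l-3}, y_{l-3}, f_{l-3[\cdots]})$; because $s(\mfx)$ and $t(\mfx)$ carry identical moduli-space components and thus identical bracket data $(x_{l-2}, y_{l-2}, \dots)$, we obtain $s\circ s(\mfx) = s \circ t(\mfx) = (x_{l-2}, \mcMhat(x_{l-3}, y_{l-3}, f_{l-3[\cdots]}))$ and likewise $t \circ s(\mfx) = t \circ t(\mfx) = (y_{l-2}, \mcMhat(x_{l-3}, y_{l-3}, f_{l-3[\cdots]}))$. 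For the base case $l = 2$ I would apply the maps $s, t : X(1) \to X(0)$ separately to $s(\mfx)$ and $t(\mfx) \in X(1)$ and read off $s\circ s(\mfx) = s \circ t(\mfx) = x_0$ together with $t \circ s(\mfx) = t \circ t(\mfx) = y_0$.

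There is no serious obstacle: the statement is a bookkeeping check. The only point requiring any care is the verification that the two `history rows' read from $s(\mfx)$ and $t(\mfx)$ are the \emph{same} tuple, but this is built into the way $X(l)$ is indexed — $x_{l-1}$ and $y_{l-1}$ live over the same Morse function $f_{l-1[\cdots]}$ by fiat — and therefore requires no further input beyond carefully tracking the subscripts.
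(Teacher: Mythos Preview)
Your proposal is correct and is exactly the natural bookkeeping verification one would expect. Note, however, that the present paper does not actually supply a proof of this lemma: it is simply cited from the earlier work \cite{hohloch}. The analogous statement for $V$ (\refVnglobular) \emph{is} proved in this paper, and there the argument is precisely the direct computation you outline---applying $s$ and $t$ twice and observing that the shared history data forces $s\circ s=s\circ t$ and $t\circ s=t\circ t$---so your approach is fully in line with the paper's methodology.
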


The $l$-cells which can be composed along $p$-cells are described by
\beqs
X(l)\x_p X(l):=\{(C_l, A_l) \in X(l) \x X(l) \mid s^{l-p}( C_l)=t^{l-p}(A_l)  \}.
\eeqs
Tuples $(C_l, A_l) \in X(l) \x_p X(l)$ can be displayed via
\begin{align*}
A_l&= \left(a_l,\mcMhat(a_{l-1}, b_{l-1}, f_{l-1 
\left[
\begin{smallmatrix}
a_{l-2}, \dots, a_{p+1}, x_p, \al_{p-1}, \dots, \al_0 \\
b_{l-2}, \dots, b_{p+1}, y_p, \be_{p-1}, \dots, \be_0
\end{smallmatrix}
\right]
})\right), \\
C_l &= \left( c_l,\mcMhat(c_{l-1}, d_{l-1}, f_{l-1 
\left[
\begin{smallmatrix}
c_{l-2}, \dots, c_{p+1}, y_p, \al_{p-1}, \dots, \al_0 \\
d_{l-2}, \dots, d_{p+1}, z_p, \be_{p-1}, \dots, \be_0
\end{smallmatrix}
\right]
})\right) .
\end{align*}
Being in $ X(l) \x_p X(l)$ means the following: Both $l$-cells arise, up to level $(p-1)$, from the same critical points 
$\left[
\begin{smallmatrix}
\al_{p-1}, \dots, \al_0 \\
\be_{p-1}, \dots, \be_0
\end{smallmatrix}
\right]$.
At level $p$, we have the matching condition 
$\begin{smallmatrix}
\left[
\begin{smallmatrix}
 x_p \\
 y_p
\end{smallmatrix}
\right] \\
\left[
\begin{smallmatrix}
 y_p \\
 z_p
\end{smallmatrix}
\right]
\end{smallmatrix}.
$
There are no additional conditions on the critical points on the higher levels 
$
\begin{smallmatrix}
\left[
\begin{smallmatrix}
a_{l-2} , \dots, a_{p+1} \\
b_{l-2}, \dots, b_{p+1}
\end{smallmatrix}
\right] \\
\left[
\begin{smallmatrix}
c_{l-2}, \dots, c_{p+1} \\
d_{l-2}, \dots, d_{p+1}
\end{smallmatrix}
\right]
\end{smallmatrix}
$
apart from the ones required in the definition of $X(l)$. The tuple 
$
\left[
\begin{smallmatrix}
a_{l-2}, \dots, a_{p+1}, x_p, \al_{p-1}, \dots, \al_0 \\
b_{l-2}, \dots, b_{p+1}, y_p, \be_{p-1}, \dots, \be_0
\end{smallmatrix}
\right]
$
is the {\em history} of $A_l$ up to level $(l-2)$.
If $j=1$ in the two expressions above then there are no $a$'s and $b$'s resp. $c$'s and $d$'s in the index of the function.

\vsp 

The identity functions $\mathbf{1} : X(l) \to X(l+1)$ are defined as follows. 
Let $x_0 \in X(0)$ 
and identify $x_0$ with the moduli space $\mcMhat(x_0, x_0, f_0)$. Then identify $\mcMhat(x_0, x_0, f_0)$ with the only critical point 
$x_1\in \Crit(f_{1 
\left[ 
\begin{smallmatrix}
x_0 \\                                                                                                                                                                  
x_0                                                                                                                                                                 \end{smallmatrix}
 \right]})$ 
on $\mcMhat(x_0, x_0, f_0) $. Thus we have $x_1 \simeq \mcMhat(x_0, x_0, f_0) \simeq x_0$ and we set
\beqs
\mathbf{1}_{x_0}:=\mathbf{1}(x_0):= (x_0, \mcMhat(x_0, x_0, f_0)).
\eeqs
For $l>0$, we set for $A_l= \left(a_l, \mcMhat(a_{l-1}, b_{l-1}, f_{l-1
\left[
\begin{smallmatrix}
 a_{l-2}, \dots, a_0 \\
b_{l-2}, \dots, b_0
\end{smallmatrix}
\right]}
)\right) \in X(l)$
\begin{align*}
\mathbf{1}_{A_l}
& := \mathbf{1}\left (a_l, \mcMhat(a_{l-1}, b_{l-1}, f_{l-1
\left[
\begin{smallmatrix}
 a_{l-2}, \dots, a_0 \\
b_{l-2}, \dots, b_0
\end{smallmatrix}
\right]}
)\right) \\
& := \left(a_l, \mcMhat(a_l, a_l, f_{l
\left[
\begin{smallmatrix}
 a_{l-1}, \dots, a_0 \\
b_{l-1}, \dots, b_0
\end{smallmatrix}
\right]
})\right) \\
& := \left(a_{l+1}, \mcMhat(a_l, a_l, f_{l
\left[
\begin{smallmatrix}
 a_{l-1}, \dots, a_0 \\
b_{l-1}, \dots, b_0
\end{smallmatrix}
\right]
})\right)
\end{align*}
where we again identified $a_{l+1} \simeq a_l$. For $0 \leq l \leq n-1$, this gives us functions 
\beqs
\mathbf{1} : X(l) \to X(l+1). 
\eeqs

The composite $\circ_p$ for $l > p \geq 0$ is defined as follows.


\subsubsection*{Case $l \in \N$ and $p=0$:}

There are no $\al$'s and $\be$'s such that the `history index' starts with $x_0$, $y_0$, $z_0$. We set
\begin{align*}
 & \left( c_l, \mcMhat(c_{l-1}, d_{l-1}, f_{l-1 \left[ \begin{smallmatrix} c_{l-2}, \dots, c_1, y_0 \\ d_{l-2}, \dots, d_1, z_0 \end{smallmatrix} \right]}) \right)
\circ_0
\left(
a_l, \mcMhat(a_{l-1}, b_{l-1}, f_{l-1 \left[ \begin{smallmatrix} a_{l-2}, \dots, a_1, x_0 \\ b_{l-2}, \dots, b_1, y_0 \end{smallmatrix} \right]})
\right) \\
& :=
\left(
(a_l, c_l), \mcMhat\bigl((a_{l-1}, c_{l-1}), (b_{l-1}, d_{l-1}), f_{l-1 \left[ \begin{smallmatrix} (a_{l-2}, c_{l-2}), \dots, (a_1, c_1), x_0 \\ (b_{l-2}, d_{l-2}), \dots, (b_1, d_1), z_0 \end{smallmatrix} \right]}\bigr)
\right).
\end{align*}


\subsubsection*{Case $l \in \N$ and $l-2 \geq p \geq 1$:}

We set
\begin{align*}
& \left(
c_l, \mcMhat(c_{l-1}, d_{l-1}, f_{l-1 \left[ 
\begin{smallmatrix} 
c_{l-2}, \dots, c_{p+1}, y_p, \al_{p-1}, \dots, \al_0 \\
d_{l-2}, \dots, d_{p+1}, z_p, \be_{p-1}, \dots, \be_0
\end{smallmatrix} \right]})
\right) \\
& \quad \circ_p
\left(
a_l, \mcMhat(a_{l-1}, b_{l-1}, f_{l-1 \left[ 
\begin{smallmatrix} 
a_{l-2}, \dots, a_{p+1}, x_p, \al_{p-1}, \dots, \al_0 \\
b_{l-2}, \dots, b_{p+1}, y_p, \be_{p-1}, \dots, \be_0
\end{smallmatrix} \right]})
\right) \\
& := 
\left(
(a_l,c_l), \mcMhat\bigl((a_{l-1}, c_{l-1}), (b_{l-1}, d_{l-1}), f_{l-1 \left[ 
\begin{smallmatrix} 
(a_{l-2}, c_{l-2}), \dots, (a_{p+1}, c_{p+1}), x_p, \al_{p-1}, \dots, \al_0 \\
(b_{l-2}, d_{l-2}), \dots, (b_{p+1}, d_{p+1}), z_p, \be_{p-1}, \dots, \be_0
\end{smallmatrix} \right]}\bigr)
\right) .
\end{align*}


\subsubsection*{Case $l \in \N$ and $p=l-1$:}

There are no $a$'s, $b$'s, $c$'s and $d$'s in the `history index' which ends with $x_{l-1}$, $y_{l-1}$, $z_{l-1}$. We set 
\begin{align*}
& \left(
c_l, \mcMhat(y_{l-1}, z_{l-1}, f_{l-1 \left[ \begin{smallmatrix} \al_{l-2}, \dots, \al_0 \\ \be_{l-2}, \dots, \be_0 \end{smallmatrix} \right]})
\right) \\
& \quad \circ_{l-1}
\left(
a_l, \mcMhat(x_{l-1}, y_{l-1}, f_{l-1 \left[ \begin{smallmatrix} \al_{l-2}, \dots, \al_0 \\ \be_{l-2}, \dots, \be_0 \end{smallmatrix} \right]})
\right) \\
& := 
\left(
(a_l, c_l), \mcMhat(x_{l-1}, z_{l-1}, f_{l-1 \left[ \begin{smallmatrix} \al_{l-2}, \dots, \al_0 \\ \be_{l-2}, \dots, \be_0 \end{smallmatrix} \right]})
\right).
\end{align*}

Note that this construction depends on the choice of a family of Morse functions $F:=\{f_0, f_{1 \left[\begin{smallmatrix} x_0 \\ y_0 \end{smallmatrix} \right]}, \dots\}$ and metrics $G:= \{g_0, g_{1 \left[\begin{smallmatrix} x_0 \\ y_0 \end{smallmatrix} \right]}, \dots\}$.

\begin{Theorem}[Hohloch \cite{hohloch}]
\label{morsencategory}
The above defined $n$-globular set $X=\{X(l) \mid 0\leq l \leq n\}$ together with the above defined identity functions $\mathbf{1}$ and composites $\circ_p$ is an almost strict $n$-category $\mcX:=\mcX(F,G)$, called the \textbf{almost strict Morse $n$-category}.
\end{Theorem}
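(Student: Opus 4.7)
The plan is to verify the six axioms (a)--(f) of \refncategory\ hold up to canonical isomorphism for the $n$-globular set $X$ equipped with the $\circ_p$ and $\mathbf{1}$ defined in \refsubsectionmorsencat, given that $X$ is already an $n$-globular set by \refxnglobular. Axioms (a) and (b) on sources and targets of composites and identities are checked by unwinding the case distinctions ($p=0$, $1 \leq p \leq l-2$, $p=l-1$) in the definition of $\circ_p$: in each case, source and target at level $l-1$ of the composite $C_l \circ_p A_l$ read off the first, respectively second, component of the paired entries in the moduli space history, and this matches the required formulas. Axiom (b) is immediate from the definition of $\mathbf{1}_{A_l}$ using the identification $\mcMhat(a_l, a_l, f_{l[\cdots]}) \simeq \{a_l\}$.

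Axiom (d), the identity law, reduces to the same identification: the $(l+1)$-cell $\mathbf{1}_{A_l}$ is labeled by the trivial moduli space over a single critical point, and composing it with $A_l$ at level $p = l-1$ simply reproduces $A_l$, while for $p < l-1$ it multiplies the history entry with a singleton factor, which is canonically isomorphic to the original entry. Axiom (c), associativity, proceeds by writing out $(E_l \circ_p C_l) \circ_p A_l$ and $E_l \circ_p (C_l \circ_p A_l)$ in the three cases: both sides produce an $l$-cell whose moduli space entries at levels $p+1, \dots, l-1$ are iterated cartesian products of the three corresponding entries. Since the cartesian product is associative only up to the canonical isomorphism $(X \times Y) \times Z \cong X \times (Y \times Z)$, associativity of $\circ_p$ holds only up to canonical isomorphism; this is precisely the reason $\mcX$ is almost strict rather than strict.

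Axiom (f), the nullary interchange, follows from the same singleton-identification: both $\mathbf{1}_{C_l} \circ_p \mathbf{1}_{A_l}$ and $\mathbf{1}_{C_l \circ_p A_l}$ are $(l+1)$-cells whose level-$l$ history entry is the trivial moduli space over the composite $C_l \circ_p A_l$, and the equality reduces to the fact that the identity on a cartesian product coincides with the cartesian product of identities.

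The main obstacle will be axiom (e), the binary interchange, since it simultaneously involves two composition levels $q < p < l$. The plan is to expand $(H_l \circ_p E_l) \circ_q (C_l \circ_p A_l)$ and $(H_l \circ_q C_l) \circ_p (E_l \circ_q A_l)$ and compare the resulting moduli space histories slot by slot. At levels strictly above $p$, both sides yield the fourfold cartesian product of the level-entries of $A_l$, $C_l$, $E_l$, $H_l$; at levels strictly between $q$ and $p$, both yield the appropriate twofold cartesian products; at level $p$ the matching condition from composition at level $p$ enters symmetrically on both sides; and at levels below $q$ the shared history $[\al_{q-1}, \dots, \al_0 / \be_{q-1}, \dots, \be_0]$ is untouched. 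The equality of the two sides then follows from the commutativity/associativity of cartesian product up to canonical isomorphism, e.g.\ $(A \times C) \times (E \times H) \cong (A \times E) \times (C \times H)$. Once this careful bookkeeping is set up, the interchange becomes a purely formal identification, completing the proof that $\mcX(F,G)$ is an almost strict $n$-category.
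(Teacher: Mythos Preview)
Your proposal is sound, but note that this paper does not actually contain a proof of \refmorsencategory: the theorem is attributed to the earlier paper \cite{hohloch} and is only quoted here as background for the construction of $\mcV$, $\mcW$, and the functors. So there is no proof in the present paper to compare against directly.

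That said, your axiom-by-axiom verification is exactly the expected approach, and it mirrors precisely what this paper does carry out in full for the analogous statements \refhomncat\ and \refWncat\ (see the joint proof in Section~4.1). There the authors check axioms (a)--(f) of \refncategory\ one at a time, with associativity and binary interchange holding only up to canonical isomorphisms of the form $\R^{(i+\mu)} \times \R^{(\kappa+\phi)} \simeq \R^{(i+\kappa)} \times \R^{(\mu+\phi)}$; your cartesian-product identifications $(A \times C) \times (E \times H) \cong (A \times E) \times (C \times H)$ play the identical role for $\mcX$. Your identification of (c) and (e) as the source of ``almost strict'' rather than ``strict'' is correct and is the key structural point. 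The only thing you leave slightly implicit is that the composite $C_l \circ_p A_l$ is well-defined as an element of $X(l)$ in the first place --- i.e.\ that the paired critical point $(a_k, c_k)$ really is a critical point of the product Morse function on the product moduli space, and that the resulting history satisfies the constraints defining $X(l)$; this relies on the compatibility conditions imposed on the family $F$ of Morse functions (items 1)--3) at the start of \refsubsectionmorsencat) and is presumably where the cited paper \cite{hohloch} does the real work.
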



\section{Functors to the almost strict $n$-categories $\mcV$ and $\mcW$}

Let $M$ be a smooth compact manifold.
Denote by $\mcX=\mcX(F,G)$ the almost strict Morse $n$-category depending on Morse data $F=\left(f_{0, [ \begin{aligned} \dots \end{aligned} ]}, \dots \right)$ and $G=\left(g_{0, [ \begin{aligned} \dots \end{aligned} ]}, \dots \right)$ as defined in \refsubsectionmorsencat. In the present section, we introduce two almost strict $n$-categories $\mcV$ and $\mcW$ and provide $n$-functors $\mcF : \mcX  \to \mcV$ and $\mcF : \mcX  \to \mcW$. Since $\mcV$ and $\mcW$ are more accessible and easier to understand than $\mcX$ the functors help to understand the nature of $\mcX$. 

The idea is similar to representation theory of groups, where one studies homomorphisms from a given, often complicated group into an easier one, hoping to gain some knowledge of the complicated group via its image.


\subsection{The almost strict $n$-categories $\mcV$ and $\mcW$}

We define the $n$-globular set $V=\{V(l) \mid {0 \leq l \leq n} \}$ as follows. We set
\beqs
V(0):= \{ \R^{i_0} \mid i_0 \in \N_0 \}
\eeqs
and 
\begin{align*}
V(1):= \left\{ (\R^{i_1}, \Hom ( \R^{i_0}, \R^{j_0})) \left| \begin{aligned} & 0 \leq i_1 < i_0 - j_0, \\ & 0 \leq j_0 \leq i_0 \end{aligned} \right. \right\}
\end{align*}
and
\begin{align*}
V(2):= \left\{ \bigl(\R^{i_2}, \Hom(\R^{i_1}, \R^{j_1}), \Hom ( \R^{i_0}, \R^{j_0})\bigr) \left|
\begin{aligned} 
 & 0 \leq i_2 < i_1 - j_1, \\
 & 0 \leq j_1 \leq i_1 < i_0 - j_0, \\
 & 0 \leq j_0 \leq i_0
\end{aligned}
 \right. \right\}
\end{align*}
and generally for $n \geq l \geq 1$
\begin{align*}
V(l):= \left\{ \bigl(\R^{i_l}, \Hom(\R^{i_{l-1}}, \R^{j_{l-1}}), \dots,  \Hom ( \R^{i_0}, \R^{j_0})\bigr) \left|
\begin{aligned}
 & 0 \leq i_l <  i_{l-1} - j_{l-1} \\
 & 0 \leq j_{l-1} \leq i_{l-1} < i_{l-2} - j_{l-2} \\
 & \quad \quad \vdots \\
 & 0 \leq j_1 \leq i_1 < i_0 - j_0, \\
 & 0 \leq j_0 \leq i_0
\end{aligned}
 \right. \right\}.	
\end{align*}
We define the source functions $ s: V(l) \to V(l-1)$ and target functions $t: V(l) \to V(l-1)$ as follows. For $n \geq l \geq 2$ set
\begin{align*}
&  s\bigl(\R^{i_l}, \Hom(\R^{i_{l-1}}, \R^{j_{l-1}}), \dots,  \Hom ( \R^{i_0}, \R^{j_0})\bigr) \\
 & \qquad \qquad := \bigl(\R^{i_{l-1}}, \Hom(\R^{i_{l-2}}, \R^{j_{l-2}}), \dots,  \Hom ( \R^{i_0}, \R^{j_0})\bigr), \\
 &  t\bigl(\R^{i_l}, \Hom(\R^{i_{l-1}}, \R^{j_{l-1}}), \dots,  \Hom ( \R^{i_0}, \R^{j_0})\bigr) \\
 & \qquad \qquad := \bigl(\R^{j_{l-1}}, \Hom(\R^{i_{l-2}}, \R^{j_{l-2}}), \dots,  \Hom ( \R^{i_0}, \R^{j_0})\bigr), 
\end{align*}
and 
\begin{align*}
 s\bigl(\R^{i_1}, \Hom ( \R^{i_0}, \R^{j_0})\bigr) & : = \R^{i_0}, \\
 t\bigl(\R^{i_1}, \Hom ( \R^{i_0}, \R^{j_0})\bigr) & : = \R^{j_0}.
\end{align*}

\begin{Lemma}
\label{Vnglobular}
$V$ is an $n$-globular set with $s$ as source and $t$ as target function.
\end{Lemma}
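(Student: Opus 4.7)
The plan is to check the two conditions in \refnglobular: first that $s$ and $t$ actually land in $V(l-1)$, and then that the globular identities $s\circ s = s\circ t$ and $t\circ s = t\circ t$ hold on $V(l)$ for $2 \leq l \leq n$.

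For well-definedness, fix an element
\[
A_l = \bigl(\R^{i_l}, \Hom(\R^{i_{l-1}}, \R^{j_{l-1}}), \dots, \Hom(\R^{i_0}, \R^{j_0})\bigr) \in V(l).
\]
Applying $s$ produces the tuple obtained by forgetting $\R^{i_l}$ and promoting $\R^{i_{l-1}}$ to the first slot. The chain of inequalities that membership in $V(l-1)$ demands is exactly the sub-chain starting at $0 \leq i_{l-1} < i_{l-2} - j_{l-2}$, which is among the defining conditions of $V(l)$, so $s(A_l) \in V(l-1)$. Applying $t$ instead gives the same tuple with $\R^{i_{l-1}}$ replaced by $\R^{j_{l-1}}$; membership in $V(l-1)$ now requires $0 \leq j_{l-1} < i_{l-2} - j_{l-2}$, which follows from the two inequalities $0 \leq j_{l-1} \leq i_{l-1}$ and $i_{l-1} < i_{l-2} - j_{l-2}$ already built into $V(l)$. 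The boundary cases $l=1,2$ land in $V(0)$ and $V(1)$ and are immediate from the definitions.

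For the globular identities, I would compute both sides directly for $l \geq 2$. Writing the generic element $A_l$ as above, one checks
\[
s\bigl(s(A_l)\bigr) = \bigl(\R^{i_{l-2}}, \Hom(\R^{i_{l-3}}, \R^{j_{l-3}}), \dots, \Hom(\R^{i_0}, \R^{j_0})\bigr) = s\bigl(t(A_l)\bigr),
\]
because both operations discard the top two slots $\R^{i_l}$ and $\Hom(\R^{i_{l-1}}, \R^{j_{l-1}})$ and then install $\R^{i_{l-2}}$ in the first slot; the intermediate replacement of $\R^{i_{l-1}}$ by $\R^{j_{l-1}}$ performed by $t$ is wiped out by the second $s$. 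Symmetrically,
\[
t\bigl(s(A_l)\bigr) = \bigl(\R^{j_{l-2}}, \Hom(\R^{i_{l-3}}, \R^{j_{l-3}}), \dots, \Hom(\R^{i_0}, \R^{j_0})\bigr) = t\bigl(t(A_l)\bigr),
\]
by the same cancellation. The case $l=2$, where the outputs live in $V(0)=\{\R^{i_0} \mid i_0 \in \N_0\}$, collapses to the trivial equalities $s\circ s(A_2) = \R^{i_0} = s\circ t(A_2)$ and $t\circ s(A_2) = \R^{j_0} = t\circ t(A_2)$.

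The only thing that requires a moment of care is the well-definedness of $t$, since promoting $\R^{j_{l-1}}$ to the first slot requires the strict inequality $j_{l-1} < i_{l-2}-j_{l-2}$; the defining inequalities of $V(l)$ give this only through the two-step chain noted above. Once that is observed, everything else is bookkeeping on indices, and no deeper obstacle appears.
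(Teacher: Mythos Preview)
Your argument is correct and follows the same direct-computation approach as the paper, which simply unwinds one of the globular identities exemplarily. You are in fact more careful than the paper: the well-definedness check (in particular the observation that $j_{l-1} < i_{l-2}-j_{l-2}$ must be derived from the two-step chain) is a point the paper passes over in silence.
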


\begin{proof}
We have to show $ s \circ s = s\circ t$ and $t \circ t = t \circ s$. We compute exemplarily
\begin{align*}
  & s\big(s\bigl(\R^{i_l}, \Hom(\R^{i_{l-1}}, \R^{j_{l-1}}), \dots,  \Hom ( \R^{i_0}, \R^{j_0})\bigr) \bigr) \\
  & \qquad \qquad = s\bigl(\R^{i_{l-1}}, \Hom(\R^{i_{l-2}}, \R^{j_{l-2}}), \dots,  \Hom ( \R^{i_0}, \R^{j_0})\bigr) \\
  & \qquad \qquad = \bigl(\R^{i_{l-2}}, \Hom(\R^{i_{l-3}}, \R^{j_{l-3}}), \dots,  \Hom ( \R^{i_0}, \R^{j_0})\bigr)  
\end{align*}
which coincides with
\begin{align*}
 & s\big(t\bigl(\R^{i_l}, \Hom(\R^{i_{l-1}}, \R^{j_{l-1}}), \dots,  \Hom ( \R^{i_0}, \R^{j_0})\bigr) \bigr) \\
 & \qquad \qquad = s\bigl(\R^{j_{l-1}}, \Hom(\R^{i_{l-2}}, \R^{j_{l-2}}), \dots,  \Hom ( \R^{i_0}, \R^{j_0})\bigr) \\
  & \qquad \qquad = \bigl(\R^{i_{l-2}}, \Hom(\R^{i_{l-3}}, \R^{j_{l-3}}), \dots,  \Hom ( \R^{i_0}, \R^{j_0})\bigr).
\end{align*}
\end{proof}

As identity functions $\mathbf 1 : V(l) \to V(l+1)$, we set on $V(0)$
\begin{align*}
 \mathbf 1 (\R^{i_0}) := \bigl( \R^0, \Hom(\R^{i_0}, \R^{i_0}) \bigr)= \bigl( \R^{i_0 - i_0}, \Hom(\R^{i_0}, \R^{i_0}) \bigr)
\end{align*}
and on $V(l)$ for $1 \leq l \leq n-1$
\begin{align*}
&  \mathbf 1 \bigl(\R^{i_l}, \Hom(\R^{i_{l-1}}, \R^{j_{l-1}}), \dots,  \Hom ( \R^{i_0}, \R^{j_0})\bigr) \\
 & \qquad \qquad := \bigl(\R^0, \Hom(\R^{i_l}, \R^{i_{l}}), \Hom(\R^{i_{l-1}}, \R^{j_{l-1}}) \dots,  \Hom ( \R^{i_0}, \R^{j_0})\bigr) \\
 & \qquad \qquad = \bigl(\R^{i_l - i_l},  \Hom(\R^{i_l}, \R^{i_{l}}), \Hom(\R^{i_{l-1}}, \R^{j_{l-1}}), \dots,  \Hom ( \R^{i_0}, \R^{j_0})\bigr). 
\end{align*}
In order to define the composite $\circ_p$ on $V(l) \x_p V(l)$ for $l > p \geq 0$ let us have a closer look at $V(l) \x_p V(l)$ first. 
A tuple $(R_l, Q_l) \in V(l) \x_p V(l)$ is in fact of the form
\begin{align*}
 Q_l & =\bigl(\R^{i_l}, \Hom(\R^{i_{l-1}}, \R^{j_{l-1}}), \dots,  \Hom ( \R^{i_0}, \R^{j_0}) \bigr)\\
&=\bigl(\R^{i_l}, \Hom(\R^{i_{l-1}}, \R^{j_{l-1}}), \dots,  \Hom(\R^{i_{p+1}}, \R^{j_{p+1}}),\\
&  \qquad \qquad \qquad \qquad \Hom(\R^{u_p}, \R^{v_p}), \Hom(\R^{\rho_{p-1}}, \R^{\si_{l-1}}), \dots,  \Hom ( \R^{\rho_0}, \R^{\si_0})\bigr)
\end{align*}
and
\begin{align*}
R_l & = \bigl(\R^{\mu_l}, \Hom(\R^{\mu_{l-1}}, \R^{\nu_{l-1}}), \dots,  \Hom ( \R^{\mu_0}, \R^{\nu_0}) \bigr) \\
& =\bigl(\R^{\mu_l}, \Hom(\R^{\mu_{l-1}}, \R^{\nu_{l-1}}), \dots ,\Hom(\R^{\mu_{p+1}}, \R^{\nu_{p+1}}),   \\
& \qquad \qquad \qquad \qquad \Hom(\R^{v_p}, \R^{w_p}), \Hom(\R^{\rho_{p-1}}, \R^{\si_{l-1}}), \dots,  \Hom ( \R^{\rho_0}, \R^{\si_0})\bigr)
\end{align*}
where the indices $i_k=\mu_k=:\rho_k$ and $j_k = \nu_k =: \si_k$ coincide for $0 \leq k \leq p-1$. For $k=p$, we have the matching condition $i_p=:u_p$, $j_p=\mu_p =:v_p$ and $\nu_p=:w_p$. For $p+1 \leq k \leq l$, there are no conditions on $i_k$, $j_k$, $\mu_k$ and $\nu_k$.
Using these conventions for $(R_l, Q_l):= V(l) \x_p V(l)$, we define
\begin{align*}
 & R_l \circ_p Q_l \\
 & \quad  := \bigl(\R^{i_l + \mu_l}, \Hom(\R^{i_{l-1}+ \mu_{l-1}}, \R^{j_{l-1} + \nu_{l-1}}), \dots,  \Hom(\R^{i_{p+1} + \mu_{p+1}}, \R^{j_{p+1} + \nu_{p+1}}),   \\
& \qquad \qquad \qquad\qquad \qquad \qquad \qquad  \ \ \Hom(\R^{u_p}, \R^{w_p}), \Hom(\R^{\rho_{p-1}}, \R^{\si_{l-1}}) \Hom ( \R^{\rho_0}, \R^{\si_0})\bigr) 
 \end{align*}
where we canonically identify $\R^{i_l + \mu_l} \simeq \R^{i_l} \x \R^{\mu_l}$ etc.

\begin{Theorem}
\label{homncat}
The $n$-globular set $V=\{V(l) \mid 0 \leq l \leq n \}$ together with the above defined identity functions $\mathbf 1$ and the composite $\circ_p$ yields an almost strict $n$-category $\mcV$.
\end{Theorem}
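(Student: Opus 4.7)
The plan is to verify axioms (a)--(f) of \refncategory\ in turn, interpreted up to the canonical identifications $\R^{a+b} \simeq \R^a \x \R^b$ and the induced identifications of $\Hom$-spaces; the underlying $n$-globular set structure is already provided by \refVnglobular. Throughout, I will use the parametrization of a pair $(R_l, Q_l) \in V(l) \x_p V(l)$ introduced just above the statement, with indices $i_k, j_k$ attached to $Q_l$ and $\mu_k, \nu_k$ attached to $R_l$, agreeing below level $p$ with $\rh_k, \si_k$, and satisfying the matching relation $j_p = \mu_p$ at level $p$.

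Axioms (a) and (b) reduce to direct bookkeeping on indices. For (a) with $p = l-1$, the composite $R_l \circ_p Q_l$ alters only the top dimension (to $\R^{i_l + \mu_l}$) and collapses the top $\Hom$-slot into $\Hom(\R^{u_p}, \R^{w_p})$, so reading off source and target yields $s(Q_l)$ and $t(R_l)$ respectively. For $p \leq l-2$, the operations $s$ and $t$ drop the top dimension, and one checks directly that the result is the $\circ_p$-composite of $s(R_l)$ with $s(Q_l)$ (respectively of the targets), because the matching data at level $p$ survive. Axiom (b) is immediate from the definition of $\mathbf{1}$, which prepends a $\R^0$ together with an identity $\Hom$-factor, arranged so that both $s$ and $t$ strip these two new entries and return the original cell.

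Axioms (c), (d) and (f) reduce slot by slot to the commutative-monoid structure of $(\N_0, +)$ and the canonical isomorphisms of finite products. Associativity (c) follows because $\circ_p$ adds dimensions strictly above level $p$, leaves the shared tail below level $p$ untouched, and records the source/target endpoints in the level-$p$ $\Hom$-slot in a way that is manifestly associative in three factors. The identity axiom (d) holds because $\mathbf{1}^{l-p}$ contributes the summand $0$ to every dimension strictly above level $p$ and an identity $\Hom$-factor at level $p$, so the composite with $A_l$ returns $A_l$ under the identifications $\R^{i_k + 0} \simeq \R^{i_k}$. Nullary interchange (f) is a direct check: both sides insert a $\R^0$ on top together with an identity $\Hom$-factor at the new level, and the slot-wise addition defining $\circ_p$ is unaffected.

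The main obstacle will be the binary interchange axiom (e), involving four cells matched at two distinct levels $q < p < l$. The plan is to partition the index list into three regions --- strictly above $p$, strictly between $q$ and $p$, and strictly below $q$ --- and check each region separately. Strictly above $p$ all four cells contribute independently, so both bracketings produce the same sum of four dimensions by commutativity and associativity of $+$. Strictly between $q$ and $p$ only the two pairs sharing level-$p$ matching have nontrivially summed dimensions, and both bracketings produce the same sum of two dimensions. At level $p$ itself the $\Hom$-slot is determined in both bracketings by the same two pairs $(H_l, E_l)$ and $(C_l, A_l)$, and likewise at level $q$ by $(H_l, C_l)$ and $(E_l, A_l)$; below level $q$ all four cells share the same tail. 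Once this combinatorial layout is written out carefully, agreement of the two bracketings reduces to a purely symbolic matching following from the commutative monoid structure of $(\N_0, +)$ together with the canonical identifications.
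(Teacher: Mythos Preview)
Your approach is essentially the same as the paper's: both verify the axioms (a)--(f) of \refncategory\ slot by slot, reducing everything to the commutative-monoid structure of $(\N_0,+)$ and the canonical identifications $\R^{a+b}\simeq\R^a\times\R^b$. The paper carries out the computations explicitly in the index notation of \refVW, while you summarize them in prose, but the content is identical, including the three-region decomposition for binary interchange.

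There is one point you omit that the paper does address (as its item (g)): you never check that $\circ_p$ and $\mathbf 1$ actually take values in $V(l)$ and $V(l+1)$, i.e.\ that the index inequalities $0\le i_l<i_{l-1}-j_{l-1}$, $0\le j_{l-1}\le i_{l-1}$, \ldots\ defining $V(l)$ are preserved. For the composite this is a genuine verification: from $i_k<i_{k-1}-j_{k-1}$ and $\mu_k<\mu_{k-1}-\nu_{k-1}$ one must deduce $i_k+\mu_k<(i_{k-1}+\mu_{k-1})-(j_{k-1}+\nu_{k-1})$ above level $p$, and at level $p$ one needs $i_{p+1}+\mu_{p+1}<u_p-w_p$ from $i_{p+1}<u_p-v_p$ and $\mu_{p+1}<v_p-w_p$. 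These are straightforward but should be stated; without them the maps $\circ_p$ and $\mathbf 1$ are not yet known to be well-defined, and the axioms you verify would be vacuous. The paper handles this either directly or by appeal to the Morse-theoretic origin of the constraints via \refhomfunctor.
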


Before we turn to the proof of \refhomncat, let us define a second almost strict $n$-category. For $k \in \N_0$, abbreviate $\N_0^k:=(\N_0)^k$ and define $W=\{W(l) \mid 0 \leq l \leq n\}$ via
\begin{align*}
 W(0):= \N_0
\end{align*}
and
\begin{align*}
 W(1):= \left\{ \left( i_1, \left[ \begin{aligned} i_0 \\ j_0 \end{aligned} \right] \right)  \in \N_0 \x \N_0^2  \left| \begin{aligned} & 0 \leq i_1 < i_0 - j_0, \\ & 0 \leq j_0 \leq i_0 \end{aligned} \right. \right\}
\end{align*}
and 
\begin{align*}
W(2):= \left\{ \left( i_2, \left[ \begin{aligned} i_1, i_0 \\ j_1, j_0 \end{aligned} \right] \right)  \in \N_0 \x (\N_0^2)^2    \left|
\begin{aligned} 
 & 0 \leq i_2 < i_1 - j_1, \\
 & 0 \leq j_1 \leq i_1 < i_0 - j_0, \\
 & 0 \leq j_0 \leq i_0
\end{aligned}
 \right. \right\}
\end{align*}
and generally for $n \geq l \geq 1$
\begin{align*}
W(l):= \left\{\left( i_l, \left[ \begin{aligned} i_{l-1}, \dots, i_0 \\ j_{l-1}, \dots,  j_0 \end{aligned} \right] \right)  \in \N_0 \x (\N_0^2)^l       \left|
\begin{aligned}
 & 0 \leq i_l <  i_{l-1} - j_{l-1} \\
 & 0 \leq j_{l-1} \leq i_{l-1} < i_{l-2} - j_{l-2} \\
 & \quad \quad \vdots \\
 & 0 \leq j_1 \leq i_1 < i_0 - j_0, \\
 & 0 \leq j_0 \leq i_0
\end{aligned}
 \right. \right\}.	
\end{align*}

Let us make the following important observation.

\begin{Remark}
 \label{VW}
Although $V$ and $W$ are clearly different sets, we can abbreviate elements in $V(l)$ by means of elements in $W(l)$ via identifying
\begin{align*}
 \bigl(\R^{i_l}, \Hom(\R^{i_{l-1}}, \R^{j_{l-1}}), \dots,  \Hom ( \R^{i_0}, \R^{j_0})\bigr)
 \mathrel{\widehat{=}}
 \left( i_l, \left[ \begin{aligned}
                     & i_{l-1}, \dots, i_0 \\
                     & j_{l-1}, \dots, j_0
                    \end{aligned} \right] \right)          
\end{align*}
which simplifies the notation considerably. Since the dimensions of the vector spaces in $V$ satisfy the same constraints as the integers in $W$ we can even use this short notation in proofs, keeping in mind the different canonical isomorphims of $V$ and $W$.
\end{Remark}

For $1 \leq l\leq n$, in analogy to $V$, we have the source and target functions $s: W(l) \to W(l-1)$ and $t: W(l) \to W(l-1)$
\begin{align*}
 s \left( i_l, \left[ \begin{aligned} i_{l-1}, \dots, i_0 \\ j_{l-1}, \dots,  j_0 \end{aligned} \right] \right) 
 & := \left( i_{l-1}, \left[ \begin{aligned} i_{l-2}, \dots, i_0 \\ j_{l-2}, \dots,  j_0 \end{aligned} \right] \right), \\
  t \left( i_l, \left[ \begin{aligned} i_{l-1}, \dots, i_0 \\ j_{l-1}, \dots,  j_0 \end{aligned} \right] \right) 
 & := \left( j_{l-1}, \left[ \begin{aligned} i_{l-2}, \dots, i_0 \\ j_{l-2}, \dots,  j_0 \end{aligned} \right] \right)
\end{align*}
for $l>1$ and 
\begin{align*}
 s\left( i_1, \left[ \begin{aligned} i_0 \\ j_0 \end{aligned} \right] \right):= i_0 ,\qquad \quad
 t\left( i_1, \left[ \begin{aligned} i_0 \\ j_0 \end{aligned} \right] \right):= j_0
\end{align*}
for $l=1$.

\begin{Lemma}
\label{Wnglobular}
 $W$ is an $n$-globular set with $s$ as source and $t$ as target function.
\end{Lemma}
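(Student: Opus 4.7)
The plan is to mimic the proof of \refVnglobular\ directly, exploiting the observation in \refVW\ that the underlying index data of $W$ coincides combinatorially with that of $V$. So I would first note that the sets $W(l)$ are manifestly defined for all $0 \leq l \leq n$ and that the maps $s,t : W(l) \to W(l-1)$ send an element of $W(l)$ to a tuple whose constraints are precisely those defining $W(l-1)$ (one simply drops the top index and shifts the matrix of histories one slot down), so $s$ and $t$ are well defined.

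The only nontrivial content is the globular identities $s \circ s = s \circ t$ and $t \circ s = t \circ t$ on $W(l)$ for $l \geq 2$. I would check these by a direct computation of both sides. For $l \geq 3$, take an element
\[
A_l = \left( i_l, \left[ \begin{aligned} i_{l-1}, \dots, i_0 \\ j_{l-1}, \dots, j_0 \end{aligned} \right] \right) \in W(l).
\]
Then
\[
s(s(A_l)) = s\left( i_{l-1}, \left[ \begin{aligned} i_{l-2}, \dots, i_0 \\ j_{l-2}, \dots, j_0 \end{aligned} \right] \right) = \left( i_{l-2}, \left[ \begin{aligned} i_{l-3}, \dots, i_0 \\ j_{l-3}, \dots, j_0 \end{aligned} \right] \right),
\]
and the same output is obtained from $s(t(A_l))$, because $t$ replaces the top index $i_l$ by $j_{l-1}$ without affecting the remaining rows, and then $s$ discards that top slot entirely. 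The calculation for $t \circ s = t \circ t$ is completely analogous, just replacing the outer $s$ by $t$ so that the distinguished top entry of the resulting tuple is $j_{l-2}$ rather than $i_{l-2}$.

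The only boundary case is $l = 2$, where the source and target land in $W(0) = \N_0$, which has no matrix of histories. Here one checks $s \circ s = s \circ t$ and $t \circ s = t \circ t$ as maps $W(2) \to W(0)$ explicitly: both $s(s(A_2))$ and $s(t(A_2))$ equal $i_0$, and both $t(s(A_2))$ and $t(t(A_2))$ equal $j_0$, directly from the definitions of $s,t$ at levels $1$ and $2$.

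I do not expect any genuine obstacle: the verification is purely bookkeeping, strictly parallel to \refVnglobular, and the constraints on the indices $i_k, j_k$ in the definition of $W(l)$ are preserved under dropping the top row precisely because they are the same constraints that define $W(l-1)$. The only thing to be mildly careful about is treating $l=2$ separately so as not to confuse notation when the codomain $W(0)$ has no bracketed history.
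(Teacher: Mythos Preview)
Your proposal is correct and follows the same approach as the paper: both reduce to \refVnglobular\ via the identification in \refVW. The paper's proof is in fact even terser than yours --- it is the single sentence ``Using \refVW, the claim follows from \refVnglobular'' --- whereas you spell out the bookkeeping computation explicitly; but the underlying idea is identical.
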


\begin{proof}
Using \refVW, the claim follows from \refVnglobular.
\end{proof}

Similar to $V$, as identity functions $\mathbf 1 : W(l) \to W(l+1)$, we set on $W(0)$
$$
\mathbf 1 (i_0):= \left( 0, \left[ \begin{aligned} i_0 \\ i_0 \end{aligned} \right] \right)
$$
and on $W(l)$ with $l>0$
\begin{align*}
 \mathbf 1 \left( i_l, \left[ \begin{aligned} i_{l-1}, \dots, i_0 \\ j_{l-1}, \dots,  j_0 \end{aligned} \right] \right) 
 & := \left( 0 , \left[ \begin{aligned} i_{l}, i_{l-1} \dots, i_0 \\ i_l, j_{l-1}, \dots,  j_0 \end{aligned} \right] \right).
\end{align*}
Let us introduce the composite for $W$. A tuple $(R_l, Q_l) \in W(l) \x_p W(l)$ is of the form
\begin{align*}
 Q_l  & = \left( i_l, \left[ \begin{aligned}
                     & i_{l-1}, \dots,i_{p+1}, u_p, \rho_{p-1}, \dots,  \rho_0 \\
                     & j_{l-1}, \dots, j_{p+1}, v_p, \si_{p-1}, \dots,  \si_0
                    \end{aligned} \right] \right) ,  \\
 R_l  & = \left( \mu_l, \left[ \begin{aligned}
                     & \mu_{l-1}, \dots,\mu_{p+1}, v_p, \rho_{p-1}, \dots,  \rho_0 \\
                     & \nu_{l-1}, \dots, \nu_{p+1}, w_p, \si_{p-1}, \dots,  \si_0
                    \end{aligned} \right] \right)                    
\end{align*}
where the indices are defined as in the case of the composite on $V$. We define
\begin{align*}
 & R_l \circ_p  Q_l\\
 & =              \left( \mu_l, \left[ \begin{aligned}
                     & \mu_{l-1}, \dots,\mu_{p+1}, v_p, \rho_{p-1}, \dots,  \rho_0 \\
                     & \nu_{l-1}, \dots, \nu_{p+1}, w_p, \si_{p-1}, \dots,  \si_0
                    \end{aligned} \right] \right)  
                     \circ_p
                     \left( i_l, \left[ \begin{aligned}
                     & i_{l-1}, \dots,i_{p+1}, u_p, \rho_{p-1}, \dots,  \rho_0 \\
                     & j_{l-1}, \dots, j_{p+1}, v_p, \si_{p-1}, \dots,  \si_0
                    \end{aligned} \right] \right) \\
               & : =   \left( (i_l + \mu_l) ,\left[ \begin{aligned}
                     & (i_{l-1} + \mu_{l-1}), \dots,(i_{p+1} + \mu_{p+1}), u_p, \rho_{p-1}, \dots,  \rho_0 \\
                     & (j_{l-1} + \nu_{l-1}), \dots, (j_{p+1}+  \nu_{p+1}), w_p, \si_{p-1}, \dots,  \si_0
                    \end{aligned} \right] \right)                
\end{align*}

\begin{Theorem}
\label{Wncat}
The $n$-globular set $W=\{W(l) \mid 0 \leq l \leq n \}$ together with the above defined identity functions $\mathbf 1$ and the composite $\circ_p$ yields an almost strict $n$-category denoted by $\mcW$.
\end{Theorem}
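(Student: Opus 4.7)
The plan is to reduce the claim to Theorem \ref{homncat} by exploiting the identification from Remark \ref{VW}. For each $0 \leq l \leq n$, the rule
\[
\Phi_l : V(l) \to W(l), \quad \bigl(\R^{i_l}, \Hom(\R^{i_{l-1}}, \R^{j_{l-1}}), \dots, \Hom ( \R^{i_0}, \R^{j_0})\bigr) \mapsto \left( i_l, \left[ \begin{aligned} & i_{l-1}, \dots, i_0 \\ & j_{l-1}, \dots, j_0 \end{aligned} \right] \right)
\]
is a bijection, since both $V(l)$ and $W(l)$ are parametrized by the same tuples of nonnegative integers subject to the same chain of inequalities.

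Next I would verify that the family $\Phi = \{\Phi_l\}_{0 \leq l \leq n}$ intertwines each piece of the $n$-categorical structure. That $\Phi$ commutes with the source and target functions is immediate from the formulas and is precisely the content of Lemma \ref{Wnglobular}. That $\Phi$ commutes with the identity functions $\mathbf 1$ and with the composites $\circ_p$ follows from direct term-by-term comparison of the defining formulas in $\mcV$ and $\mcW$: the only difference is that the canonical isomorphism $\R^{a+b} \simeq \R^a \times \R^b$ used to compose in $\mcV$ becomes the literal equality $a+b = a+b$ of natural numbers in $\mcW$. In particular, a pair $(R_l, Q_l) \in V(l) \times_p V(l)$ matches along a $p$-cell if and only if $(\Phi_l(R_l), \Phi_l(Q_l)) \in W(l) \times_p W(l)$ does, so $\times_p$ is preserved.

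Since the axioms (a)–(f) of Definition \ref{ncategory} are formulated entirely in terms of $s$, $t$, $\mathbf 1$, and $\circ_p$, and since $\mcV$ satisfies them up to canonical isomorphism by Theorem \ref{homncat}, the intertwining property of $\Phi$ transports each axiom to $\mcW$, proving that $\mcW$ is an almost strict $n$-category. The only genuine work is bookkeeping: at each composition level $p$, one must carefully track the common history tuple $[\rho_{p-1}, \dots, \rho_0;\si_{p-1}, \dots, \si_0]$ and the matching data $(u_p, v_p, w_p)$ through both the source/target check and the additive formula for $\circ_p$. This is the main (but routine) obstacle; the correspondence $\Phi$ makes it entirely parallel to the analogous computation for $\mcV$. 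As a bonus observation, since addition of natural numbers is strictly associative and commutative, the canonical isomorphisms disappear on the $\mcW$-side, so the above argument actually shows that $\mcW$ is a strict $n$-category, which in particular yields the claimed almost strictness.
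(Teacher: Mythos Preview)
Your proposal is correct and takes essentially the same approach as the paper: both rely on the identification in Remark \ref{VW} to treat $\mcV$ and $\mcW$ in parallel, with the paper verifying axioms (a)--(f) directly in the shared integer notation and then noting the different canonical isomorphisms needed on each side, while you phrase the same reduction as transport of structure along the bijection $\Phi$ from the already-established Theorem \ref{homncat}. Your bonus observation that $\mcW$ is in fact a \emph{strict} $n$-category (since addition of natural numbers is literally associative and commutative, so no canonical isomorphisms are needed) is a nice sharpening that the paper does not make explicit.
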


\begin{proof}[Proof of \refhomncat\ and \refWncat]
Using \refVW, we can prove \refhomncat\ and \refWncat\ simultanously if we point out the different underlying canonical isomorphisms accordingly.

{\em (a) Source and target functions of composites:} Let $(R_l, Q_l) \in V(l) \x_p V(l)$. We have to show that $s(R_l \circ_p Q_l) = s(Q_l)$ and $t(R_l \circ_p Q_l) = t(R_l)$ for $p=l-1$. We calculate for $l \geq 1$
\begin{align*}
 & s(R_l \circ_p Q_l) \\
 & =             s \left( \left( \mu_l, \left[ \begin{aligned}
                     &  v_{l-1}, \rho_{l-2}, \dots,  \rho_0 \\
                     &  w_{l-1}, \si_{l-2}, \dots,  \si_0
                    \end{aligned} \right] \right)  
                     \circ_p
                     \left( i_l, \left[ \begin{aligned}
                     &  u_{l-1}, \rho_{l-2}, \dots,  \rho_0 \\
                     &  v_{l-1}, \si_{l-2}, \dots,  \si_0
                    \end{aligned} \right] \right) \right)\\
&  =  s \left( (i_l + \mu_l) ,\left[ \begin{aligned}
                     &  u_{l-1}, \rho_{l-2}, \dots,  \rho_0 \\
                     &  w_{l-1}, \si_{l-2}, \dots,  \si_0
                    \end{aligned} \right] \right)  \\
& = \left( u_{l-1} ,\left[ \begin{aligned}
                     &   \rho_{l-2}, \dots,  \rho_0 \\
                     &   \si_{l-2}, \dots,  \si_0
                    \end{aligned} \right] \right)  \\  
&= s  \left( i_l, \left[ \begin{aligned}
                     &  u_{l-1}, \rho_{l-2}, \dots,  \rho_0 \\
                     &  v_{l-1}, \si_{l-2}, \dots,  \si_0
                    \end{aligned} \right] \right)  \\
& = s(Q_l).
\end{align*}
For $l=1$ we find 
\begin{align*}
 s \left(\left( \mu_1, \left[ \begin{aligned} v_0 \\ w_0 \end{aligned} \right]\right) \circ_0 \left(i_1,  \left[ \begin{aligned} u_0 \\ v_0 \end{aligned} \right]  \right) \right) = s\left( i_1 + \mu_1,  \left[ \begin{aligned} u_0 \\ w_0 \end{aligned} \right]\right)=u_0= s \left(i_1,  \left[ \begin{aligned} u_0 \\ v_0 \end{aligned} \right]  \right)
\end{align*}
and similar calculations prove the claim for the target function $t$. Now we address the case $0 \leq p \leq l-2$. For $(R_l, Q_l) \in V(l) \x_p V(l)$, we have to show that $s(R_l \circ_p Q_l) = s(R_l) \circ_p s(Q_l)$ and $t(R_l \circ_p Q_l) = t(R_l) \circ_p t(Q_l)$. We compute
\begin{align*}
&  s(R_l \circ_p Q_l) \\
 & = s \left(\left( \mu_l, \left[ \begin{smallmatrix}
                     \mu_{l-1}, \dots,\mu_{p+1}, v_p, \rho_{p-1}, \dots,  \rho_0 \\
                      \nu_{l-1}, \dots, \nu_{p+1}, w_p, \si_{p-1}, \dots,  \si_0
                    \end{smallmatrix} \right] \right)  
                     \circ_p
                     \left( i_l, \left[ \begin{smallmatrix}
                     & i_{l-1}, \dots,i_{p+1}, u_p, \rho_{p-1}, \dots,  \rho_0 \\
                     & j_{l-1}, \dots, j_{p+1}, v_p, \si_{p-1}, \dots,  \si_0
                    \end{smallmatrix} \right] \right) \right) \\
&  =  s \left( (i_l + \mu_l) ,\left[ \begin{smallmatrix}
                     & (i_{l-1} + \mu_{l-1}), \dots,(i_{p+1} + \mu_{p+1}), u_p, \rho_{p-1}, \dots,  \rho_0 \\
                     & (j_{l-1} + \nu_{l-1}), \dots, (j_{p+1}+  \nu_{p+1}), w_p, \si_{p-1}, \dots,  \si_0
                    \end{smallmatrix} \right] \right) \\
&  =  \left( (i_{l-1} + \mu_{l-1}) ,\left[ \begin{smallmatrix}
                     & (i_{l-2} + \mu_{l-2}), \dots,(i_{p+1} + \mu_{p+1}), u_p, \rho_{p-1}, \dots,  \rho_0 \\
                     & (j_{l-2} + \nu_{l-2}), \dots, (j_{p+1}+  \nu_{p+1}), w_p, \si_{p-1}, \dots,  \si_0
                    \end{smallmatrix} \right] \right) \\
& = \left( \mu_{l-1}, \left[ \begin{smallmatrix}
                     & \mu_{l-2}, \dots,\mu_{p+1}, v_p, \rho_{p-1}, \dots,  \rho_0 \\
                     & \nu_{l-2}, \dots, \nu_{p+1}, w_p, \si_{p-1}, \dots,  \si_0
                    \end{smallmatrix} \right] \right)  
                     \circ_p
                     \left( i_{l-1}, \left[ \begin{smallmatrix}
                     & i_{l-2}, \dots,i_{p+1}, u_p, \rho_{p-1}, \dots,  \rho_0 \\
                     & j_{l-2}, \dots, j_{p+1}, v_p, \si_{p-1}, \dots,  \si_0
                    \end{smallmatrix} \right] \right) \\
& = s \left( \mu_l, \left[ \begin{smallmatrix}
                     & \mu_{l-1}, \dots,\mu_{p+1}, v_p, \rho_{p-1}, \dots,  \rho_0 \\
                     & \nu_{l-1}, \dots, \nu_{p+1}, w_p, \si_{p-1}, \dots,  \si_0
                    \end{smallmatrix} \right] \right)  
                     \circ_p
                     s \left( i_l, \left[ \begin{smallmatrix}
                     & i_{l-1}, \dots,i_{p+1}, u_p, \rho_{p-1}, \dots,  \rho_0 \\
                     & j_{l-1}, \dots, j_{p+1}, v_p, \si_{p-1}, \dots,  \si_0
                    \end{smallmatrix} \right] \right) \\
 & = s(R_l) \circ_p s(Q_l).
\end{align*}
The claim follows similarly for the target function.

\vsp


{\em (b) Sources and targets of identities:} We show $s(\mathbf 1 (Q_l))=Q_l = t(\mathbf 1 (Q_l))$ via
\begin{align*}
  s(\mathbf 1 (Q_l)) 
 & = s \left( \mathbf 1 \left( i_l, \left[ \begin{aligned}
                     & i_{l-1}, \dots,i_{0} \\
                     & j_{l-1}, \dots, j_{0}
                    \end{aligned} \right] \right) \right) 
 = s  \left( 0, \left[ \begin{aligned}
                     & i_l, i_{l-1}, \dots,i_{0} \\
                     & i_l, j_{l-1}, \dots, j_{0}
                    \end{aligned} \right] \right) \\
 & =  \left( i_l, \left[ \begin{aligned}
                     & i_{l-1}, \dots,i_{0} \\
                     & j_{l-1}, \dots, j_{0}
                    \end{aligned} \right] \right) 
 = t  \left( 0, \left[ \begin{aligned}
                     & i_l, i_{l-1}, \dots,i_{0} \\
                     & i_l, j_{l-1}, \dots, j_{0}
                    \end{aligned} \right] \right) 
 =t(\mathbf 1 (Q_l)).
\end{align*}

\vsp


{\em (c) Associativity of the composite:}
Let $0 \leq p < l \leq n$ and $(R_l, Q_l), (S_l, R_l) \in V(l) \x_p V(l)$ and show $(S_l \circ_p R_l) \circ_p Q_l = S_l \circ_p (R_l \circ_p Q_l)$. We write
\begin{align*}
Q_l & = \left( i_l, \left[ \begin{aligned}
                     & i_{l-1}, \dots,i_{p+1}, u_p, \rho_{p-1}, \dots,  \rho_0 \\
                     & j_{l-1}, \dots, j_{p+1}, v_p, \si_{p-1}, \dots,  \si_0
                    \end{aligned} \right] \right), \\
R_l & =\left( \ka_l, \left[ \begin{aligned}
                     & \ka_{l-1}, \dots,\ka_{p+1}, v_p, \rho_{p-1}, \dots,  \rho_0 \\
                     & \lam_{l-1}, \dots, \lam_{p+1}, w_p, \si_{p-1}, \dots,  \si_0
                    \end{aligned} \right] \right) ,  \\                    
S_l & =\left( \mu_l, \left[ \begin{aligned}
                     & \mu_{l-1}, \dots,\mu_{p+1}, w_p, \rho_{p-1}, \dots,  \rho_0 \\
                     & \nu_{l-1}, \dots, \nu_{p+1}, x_p, \si_{p-1}, \dots,  \si_0
                    \end{aligned} \right] \right)  
\end{align*}
and compute
\begin{align*}
 & (S_l \circ_p R_l) \circ_p Q_l \\
 & = \left((\ka_l + \lam_l) + i_l, \left[ \begin{aligned}
                     & (\ka_{l-1} + \mu_{l-1} ) + i_{l-1}, \dots, (\ka_{p+1} + \mu_{p+1}) + i_{p+1}, u_p, \rho_{p-1}, \dots,  \rho_0 \\
                     & (\lam_{l-1} + \nu_{l-1}) + j_{l-1}, \dots, (\lam_{p+1} + \nu_{p+1}) + j_{p+1}, x_p, \si_{p-1}, \dots,  \si_0
                    \end{aligned} \right] \right) \\
& = \left(\ka_l + (\lam_l + i_l), \left[ \begin{aligned}
                     & \ka_{l-1} + (\mu_{l-1}  + i_{l-1}), \dots, \ka_{p+1} + (\mu_{p+1} + i_{p+1}), u_p, \rho_{p-1}, \dots,  \rho_0 \\
                     & \lam_{l-1} + (\nu_{l-1} + j_{l-1}), \dots, \lam_{p+1} + (\nu_{p+1} + j_{p+1}), x_p, \si_{p-1}, \dots,  \si_0
                    \end{aligned} \right] \right) \\                   
& = S_l \circ_p (R_l \circ_p Q_l).                   
\end{align*}
For $V$, we considered the cartesian product and the composition of matrices as associative using canonical isomorphisms. In case of $W$, we need the associativity of the addition of integers.

\vspace{3mm}


{\em (d) Identities:}
 For $0 \leq p < l \leq n$ and $Q_l \in V(l)$, we have to show 
\beqs
\mathbf 1^{l-p}(t^{l-p}(Q_l)) \circ_p Q_l = Q_l = Q_l \circ_p \mathbf 1^{l-p}(s^{l-p}(Q_l)).
\eeqs
For 
\begin{align*}
 Q_l & = \left( i_l, \left[ \begin{aligned}
                     & i_{l-1}, \dots, i_0 \\
                     & j_{l-1}, \dots,j_0
                    \end{aligned} \right] \right)
\end{align*}
compute
\begin{align*}
 & \mathbf 1^{l-p}(t^{l-p}(Q_l))  \circ_p Q_l \\
 & = \mathbf 1^{l-p} \left( t^{l-p} 
 \left( i_l, \left[ \begin{aligned}
                     & i_{l-1}, \dots, i_0 \\
                     & j_{l-1}, \dots,j_0
                    \end{aligned} \right] \right)
                    \right)  
                    \circ_p
 \left( i_l, \left[ \begin{aligned}
                     & i_{l-1}, \dots, i_0 \\
                     & j_{l-1}, \dots,j_0
                    \end{aligned} \right] \right) \\
& =  \mathbf 1^{l-p} 
		  \left( j_p, \left[ \begin{aligned}
                     & i_{p-1}, \dots, i_0 \\
                     & j_{p-1}, \dots,j_0
                    \end{aligned} \right] \right) 
                    \circ_p
 \left( i_l, \left[ \begin{aligned}
                     & i_{l-1}, \dots, i_0 \\
                     & j_{l-1}, \dots,j_0
                    \end{aligned} \right] \right) \\
& =  \left( 0, \left[ \begin{aligned}
                     & 0, \dots,0, j_p, i_{p-1}, \dots, i_0 \\
                     & 0, \dots,0, j_p, j_{p-1}, \dots,j_0
                    \end{aligned} \right] \right) 
                    \circ_p
 \left( i_l, \left[ \begin{aligned}
                     & i_{l-1}, \dots, i_0 \\
                     & j_{l-1}, \dots,j_0
                    \end{aligned} \right] \right) \\ 
& = \left(i_l , \left[ \begin{aligned}
                     & i_{l-1} , \dots, i_{p+1} , i_p, i_{p-1}, \dots, i_0 \\
                     & j_{l-1}, \dots, j_{p+1} , j_p, j_{p-1}, \dots,j_0
                    \end{aligned} \right] \right) \\
& = Q_l.                    
\end{align*}
Note that for $V$, in the above calculation, we used canonical isomorphims like $\R^0 \x \R^{i_l} \simeq \R^{i_l}$ a couple of times. The assertion $Q_l = Q_l \circ_p \mathbf 1^{l-p}(s^{l-p}(Q_l))$ follows analogously.

\vsp


{\em (e) Binary interchange:}
Given $0 \leq q <p <l \leq n$ and $(R_l, Q_l)$, $(T_l, S_l) \in V(l)\x_p V(l)$ and $(T_l, R_l)$, $(S_l, Q_l) \in V(l)\x_q V(l)$, we need to show 
$$(T_l \circ_p S_l) \circ_q (R_l \circ_p Q_l) = (T_l \circ_q R_l) \circ_p (S_l \circ _p Q_l).$$
$Q_l$, $R_l$, $S_l$ and $T_l$ are of the form 
\begin{align*}
Q_l & = \left( i_l, \left[ \begin{aligned}
                     & i_{l-1}, \dots,i_{p+1}, u_p, \rho_{p-1}, \dots,  \rho_0 \\
                     & j_{l-1}, \dots, j_{p+1}, v_p, \si_{p-1}, \dots,  \si_0
                    \end{aligned} \right] \right), \\
R_l & =\left( \ka_l, \left[ \begin{aligned}
                     & \ka_{l-1}, \dots,\ka_{p+1}, v_p, \rho_{p-1}, \dots,  \rho_0 \\
                     & \lam_{l-1}, \dots, \lam_{p+1}, w_p, \si_{p-1}, \dots,  \si_0
                    \end{aligned} \right] \right) ,  \\                    
S_l & =\left( \mu_l, \left[ \begin{aligned}
                     & \mu_{l-1}, \dots,\mu_{p+1}, \ubar_p, \xi_{p-1} \dots ,\xi_{q+1}, \si_q, \rho_{q-1}, \dots,  \rho_0 \\
                     & \nu_{l-1}, \dots, \nu_{p+1}, \vbar_p, \tau_{p-1}, \dots, \tau_{q+1}, \tau_q , \si_{q-1},  \dots , \si_0
                    \end{aligned} \right] \right), \\
T_l & =\left( \phi_l, \left[ \begin{aligned}
                     & \phi_{l-1}, \dots,\phi_{p+1}, \vbar_p, \xi_{p-1} \dots ,\xi_{q+1}, \si_q, \rho_{q-1}, \dots,  \rho_0 \\
                     & \psi_{l-1}, \dots, \psi_{p+1}, \wbar_p, \tau_{p-1}, \dots, \tau_{q+1}, \tau_q , \si_{q-1},  \dots , \si_0
                    \end{aligned} \right] \right).                   
\end{align*}
We compute
\begin{align*}
 & (T_l \circ_p S_l) \circ_q (R_l \circ_p Q_l) \\
 & = \left( (\mu_l + \phi_l) , \left[ \begin{aligned}
                     & (\mu_{l-1} + \phi_{l-1}), \dots,(\mu_{p+1} + \phi_{p+1}), \ubar_p, \xi_{p-1} \dots ,\xi_{q+1}, \si_q, \rho_{q-1}, \dots,  \rho_0 \\
                     & (\nu_{l-1} + \psi_{l-1}) , \dots, (\nu_{p+1} + \psi_{p+1}), \wbar_p, \tau_{p-1}, \dots, \tau_{q+1}, \tau_q , \si_{q-1},  \dots  \si_0
                    \end{aligned} \right] \right) \\
 & \quad \quad \circ_q
  \left( (i_l + k_l) , \left[ \begin{aligned}
                     & (i_{l-1} + \ka_{l-1}), \dots,(i_{p+1} + \ka_{p+1}), u_p, \rho_{p-1}, \dots,  \rho_0 \\
                     & (j_{l-1} + \lam_{l-1}), \dots,( j_{p+1} + \lam_{p+1}), w_p, \si_{p-1}, \dots,  \si_0
                    \end{aligned} \right] \right) \\
& = \left( (i_l + k_l) + (\mu_l + \phi_l), \Gamma \right)             
\end{align*}
where
\begin{align*}
 & \Gamma:= \left[
    \begin{smallmatrix}
     \bigl((i_{l-1} + \ka_{l-1}) + (\mu_{l-1} + \phi_{l-1})\bigr), \dots, \bigl((i_{p+1} + \ka_{p+1}) + (\mu_{p+1} + \phi_{p+1})\bigr), \\
      \bigl( (j_{l-1} + \lam_{l-1}) + (\nu_{l-1} + \psi_{l-1})\bigr) , \dots, \bigl( ( j_{p+1} + \lam_{p+1})+ (\nu_{p+1} + \psi_{p+1}) \bigr), 
    \end{smallmatrix}
      \right. \\
& \\
& \qquad \qquad \qquad   \qquad \qquad  \qquad \qquad \qquad  
    \left.
      \begin{smallmatrix}
      (u_p + \ubar_p), (\rho_{p-1} + \xi_{p-1}) , \dots, (\rho_{q+1} + \xi_{q+1}), \rho_q, \rho_{q-1}, \dots,   \rho_0 \\
       (w_p + \wbar_p), (\si_{p-1} + \tau_{p-1}), \dots, (\si_{q+1} + \tau_{q+1}), \tau_q , \si_{q-1},  \dots  ,\si_0
    \end{smallmatrix}
    \right] .
\end{align*}
On the other hand, we have
\begin{align*}
 & (T_l \circ_q R_l) \circ_p (S_l \circ _p Q_l) \\
 & = \left( (\ka_l + \phi_l), \left[ \begin{smallmatrix}
                      (\ka_{l-1} + \phi_{l-1}), \dots,(\ka_{p+1} + \phi_{p+1}), (v_p + \vbar_p), (\rho_{p-1} + \xi_{p-1}) \dots ,(\rho_{q+1} +\xi_{q+1}), \rho_q, \rho_{q-1}, \dots,  \rho_0 \\
                      (\lam_{l-1} + \psi_{l-1}), \dots, (\lam_{p+1} + \psi_{p+1}),(w_p +  \wbar_p), (\si_{p-1} + \tau_{p-1}), \dots, (\si_{q+1} + \tau_{q+1}), \tau_q , \si_{q-1},  \dots , \si_0
                    \end{smallmatrix} \right] \right) \\  
& \quad \circ_q 
  \left( (i_l + \mu_l), \left[ \begin{smallmatrix}
                      (i_{l-1} + \mu_{l-1}), \dots,(i_{p+1} + \mu_{p+1}),(u_p + \ubar_p),(\rho_{p-1} +  \xi_{p-1}) \dots ,(\rho_{q+1} + \xi_{q+1}), \rho_q, \rho_{q-1}, \dots,  \rho_0 \\
                      (j_{l-1} + \nu_{l-1}), \dots, (j_{p+1} + \nu_{p+1}),(v_p +  \vbar_p),(\si_{p-1} +  \tau_{p-1}), \dots,(\si_{q+1} + \tau_{q+1}), \tau_q , \si_{q-1},  \dots , \si_0
                    \end{smallmatrix} \right] \right) \\ 
& = \left( (i_l + \mu_l) + (\ka_l + \phi_l), \De \right)
\end{align*}
where
\begin{align*}
& \De:= \left[ \begin{smallmatrix}
               \bigl((i_{l-1} + \mu_{l-1}) + (\ka_{l-1} + \phi_{l-1}) \bigr), \dots,\bigl( (i_{p+1} + \mu_{p+1}) + (\ka_{p+1} + \phi_{p+1}) \bigr), \\
                \bigl( (j_{l-1} + \nu_{l-1}) + (\lam_{l-1} + \psi_{l-1}) \bigr), \dots, \bigl( (j_{p+1} + \nu_{p+1}) + (\lam_{p+1} + \psi_{p+1}) \bigr) ,
              \end{smallmatrix}
	\right. \\
& \\	
& \quad \qquad \qquad \qquad \qquad \qquad \qquad	\left. \begin{smallmatrix}
               (u_p + \ubar_p),(\rho_{p-1} +  \xi_{p-1}) \dots ,(\rho_{q+1} + \xi_{q+1}), \rho_q, \rho_{q-1}, \dots,  \rho_0 \\
               (w_p +  \wbar_p),(\si_{p-1} +  \tau_{p-1}), \dots,(\si_{q+1} + \tau_{q+1}), \tau_q , \si_{q-1},  \dots , \si_0
              \end{smallmatrix}
	\right]	.
\end{align*}
Up to canonical isomorphisms of the type 
$$ \R^{(i_l + \mu_l) } \x \R^{ (\ka_l + \phi_l) }\simeq \R^{i_l + \mu_l + \ka_l + \phi_l} \simeq \R^{(i_l + \ka_l) } \x \R^{ (\mu_l + \phi_l) },$$ 
we proved the claim for $V$. For $W$, we need commutativity and associativity of the addition of integers.

\vsp


{\em (f) Nullary interchange:} For $0 \leq p <l <n$ and $(R_l, Q_l) \in V(l) \x _p V(l)$, we need to show $\mathbf 1_{R_l} \circ_p \mathbf 1_{Q_l} = \mathbf 1_{R_l \circ_p Q_l}$. Consider
\begin{align*}
Q_l & = \left( i_l, \left[ \begin{aligned}
                     & i_{l-1}, \dots,i_{p+1}, u_p, \rho_{p-1}, \dots,  \rho_0 \\
                     & j_{l-1}, \dots, j_{p+1}, v_p, \si_{p-1}, \dots,  \si_0
                    \end{aligned} \right] \right), \\
R_l & =\left( \ka_l, \left[ \begin{aligned}
                     & \ka_{l-1}, \dots,\ka_{p+1}, v_p, \rho_{p-1}, \dots,  \rho_0 \\
                     & \lam_{l-1}, \dots, \lam_{p+1}, w_p, \si_{p-1}, \dots,  \si_0
                    \end{aligned} \right] \right)            
\end{align*}
and compute
\begin{align*}
& \mathbf 1_{R_l} \circ_p \mathbf 1_{Q_l} \\
& = \left( 0, \left[ \begin{smallmatrix}
                      \ka_l, \ka_{l-1}, \dots,\ka_{p+1}, v_p, \rho_{p-1}, \dots,  \rho_0 \\
                      \ka_l, \lam_{l-1}, \dots, \lam_{p+1}, w_p, \si_{p-1}, \dots,  \si_0
                    \end{smallmatrix} \right] \right)   
 \circ_p
    \left( 0 , \left[ \begin{smallmatrix}
                     i_l, i_{l-1}, \dots,i_{p+1}, u_p, \rho_{p-1}, \dots,  \rho_0 \\
                      i_l , j_{l-1}, \dots, j_{p+1}, v_p, \si_{p-1}, \dots,  \si_0
                    \end{smallmatrix} \right] \right) \\
& = \left( 0 , \left[ \begin{smallmatrix}
                     (i_l + \ka_l) , (i_{l-1} + \ka_{l-1}) , \dots, (i_{p+1} + \ka_{p+1}) , u_p, \rho_{p-1}, \dots,  \rho_0 \\
                      (i_l + \ka_l) , (j_{l-1} + \lam_{l-1}) , \dots, ( j_{p+1} + \lam_{p+1}), w_p, \si_{p-1}, \dots,  \si_0
                    \end{smallmatrix} \right] \right) \\
& = \mathbf 1_{R_l \circ_p Q_l}
\end{align*}


{\em (g) The conditions on the indices in $V$ and $W$:}
Either one can show directly that the composite and the identity functions preserve the index requirements in the definition of $V$ and $W$ or one consults (d) in the proof of \refhomfunctor. This finishes the proof of \refhomncat.
\end{proof}


\subsection{The functors $\mcF : \mcX \to \mcV$ and $\mcG: \mcX \to \mcW$}

Let us now define the $n$-functor $\mcF: \mcX \to \mcV$. The functor will preserve the levels of the $n$-globular sets $X=\{X(l) \mid 0 \leq l \leq n\}$ and $V=\{V(l) \mid  0 \leq l \leq n\}$. We assume the notations and setting of \refsubsectionmorsencat. Recall that the Morse index of a critical point $x$ is denoted by $\Ind(x)$. We set
\begin{align*}
 \mcF: X(0)= \Crit(f_0) \to V(0) = \{ \R^{i_0} \mid i_0 \in \N_0\}, \quad x_0 \mapsto \R^{\Ind(x_0)}
\end{align*}
and 
\begin{gather*}
 \mcF: X(1) = \{ (x_1, \mcMhat(x_0, y_0, f_0)) \mid \dots\} \to V(1) = \{ (\R^{i_1} , \Hom(\R^{i_0}, \R^{j_0})) \mid \dots\} \\
 (x_1, \mcMhat(x_0, y_0, f_0))\quad  \mapsto \quad (\R^{\Ind(x_1)} , \Hom(\R^{\Ind(x_0)}, \R^{\Ind(y_0)}))
\end{gather*}
and generally for $1 \leq l \leq n$ using short notation
\begin{gather*}
\mcF: X(l) \to V(l) \\
\left(a_l, \mcMhat\bigl(a_{l-1}, b_{l-1}, f_{l-1
\left[
\begin{smallmatrix}
{a_{l-2}, \dots, a_0} \\
{b_{l-2}, \dots, b_0}
\end{smallmatrix}
\right]
}\bigr)\right)
\mapsto
\left(
\Ind(a_l), \left[ \begin{smallmatrix}
            \Ind(a_{l-1}), \dots, \Ind(a_0) \\
            \Ind(b_{l-1}), \dots, \Ind(b_0)
           \end{smallmatrix}
\right]\right).
\end{gather*}

\begin{Theorem}
\label{homfunctor}
$\mcF$ is an almost strict $n$-functor from $\mcX$ to $\mcV$.
\end{Theorem}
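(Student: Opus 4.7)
The plan is to verify the three conditions of \refalmoststrict\ in turn: that $\mcF$ lands in $V(l)$ for each $l$, that it commutes with the source and target functions (so that it is a map of $n$-globular sets), and that it commutes with the identities $\mathbf 1$ and the composites $\circ_p$. My argument will rest on two elementary Morse-theoretic facts which I would state at the outset: the dimension formula $\dim \mcMhat(x,y,f) = \Ind(x) - \Ind(y) - 1$, and the additivity of the Morse index on products, $\Ind((a,c)) = \Ind(a) + \Ind(c)$, which follows because the Hessian of a product Morse function is block-diagonal.

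First I would check that $\mcF(X(l)) \subseteq V(l)$. Since a critical point $x_k$ of the Morse function living on $\mcMhat(x_{k-1}, y_{k-1}, \cdot)$ has Morse index bounded by the dimension of that moduli space, the chain of constraints $0 \leq j_{k-1} \leq i_{k-1}$ and $0 \leq i_k < i_{k-1} - j_{k-1}$ defining $V(l)$ falls out by applying the dimension formula at every level; this simultaneously disposes of point (g) in the proof of \refhomncat. Next, to see that $\mcF$ intertwines $s$ and $t$, I would unwind the definitions on a generic $l$-cell: both $s\circ\mcF$ and $\mcF\circ s$ produce the tuple headed by $\R^{\Ind(a_{l-1})}$ with the history truncated by one slot, and the target case is analogous with $b_{l-1}$ in place of $a_{l-1}$.

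For compatibility with $\mathbf 1$, the key input is that the newly created critical point of the identity construction in $\mcX$ sits on a zero-dimensional moduli space and therefore has Morse index $0$; this matches precisely the $\R^0$ appearing at the top of $\mathbf 1$ in $\mcV$, while the freshly inserted $\Hom(\R^{\Ind(a_l)}, \R^{\Ind(a_l)})$ slot reflects the canonical self-identification of $a_l$. For composites $\circ_p$ the decisive ingredient is the Morse-index additivity on products: the critical points at levels $k = p+1, \dots, l$ in $R_l \circ_p Q_l$ are the pairs $(a_k, c_k)$ and $(b_k, d_k)$ living on products of unparametrized moduli spaces, so under $\mcF$ they become the integers $\Ind(a_k) + \Ind(c_k)$ and $\Ind(b_k) + \Ind(d_k)$, which are exactly the entries produced by $\circ_p$ in $\mcV$; level $p$ and the shared history below it are untouched by the composition and pass through unchanged.

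There is no deep obstacle: the proof amounts to bookkeeping driven by the two Morse-theoretic inputs above. The main risk is notational, namely carefully aligning the history tuples and the matching condition at level $p$ from \refsubsectionmorsencat\ against the definition of $\circ_p$ in $\mcV$. Once the compatibility of $\mcF$ with $\circ_p$ is laid out, the analogous statement for $\mcG : \mcX \to \mcW$, namely \refwfunctor, should follow by reading the same calculation through the notational identification of \refVW.
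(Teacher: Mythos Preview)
Your proposal is correct and follows essentially the same route as the paper: a direct verification that $\mcF$ respects the $n$-globular structure, the identities, and the composites, with part (d) of the paper corresponding to your check that $\mcF(X(l)) \subseteq V(l)$ via the dimension formula. You make explicit the index additivity $\Ind((a,c)) = \Ind(a) + \Ind(c)$ via the block-diagonal Hessian, which the paper uses silently in step (c), and you correctly note that the same computation yields \refwfunctor\ through \refVW; otherwise the arguments coincide.
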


Before we prove \refhomfunctor\ we define an $n$-functor $\mcG$ from $\mcX $ to $\mcW$. We set
\begin{align*}
 \mcG: X(0)= \Crit(f_0) \to W(0) = \N_0, \quad x_0 \mapsto \Ind(x_0)
\end{align*}
and 
\begin{gather*}
 \mcG: X(1) = \{ (x_1, \mcMhat(x_0, y_0, f_0)) \mid \dots\} \to W(1)  \\
 (x_1, \mcMhat(x_0, y_0, f_0))\quad  \mapsto \quad \left(\Ind(x_1) ,\left[ \begin{aligned} \Ind(x_0) \\ \Ind(y_0) \end{aligned} \right] \right)
\end{gather*}
and generally for $1 \leq l \leq n$ 
\begin{gather*}
\mcG: X(l) \to W(l) \\
\left(a_l, \mcMhat\bigl(a_{l-1}, b_{l-1}, f_{l-1
\left[
\begin{smallmatrix}
{a_{l-2}, \dots, a_0} \\
{b_{l-2}, \dots, b_0}
\end{smallmatrix}
\right]
}\bigr)\right)
\mapsto
\left(
\Ind(a_l), \left[ \begin{smallmatrix}
            \Ind(a_{l-1}), \dots, \Ind(a_0) \\
            \Ind(b_{l-1}), \dots, \Ind(b_0)
           \end{smallmatrix}
\right]\right).
\end{gather*}

\begin{Theorem}
\label{wfunctor}
$\mcG$ is an almost strict $n$-functor from $\mcX$ to $\mcW$.
\end{Theorem}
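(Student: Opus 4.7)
The plan is to verify the three defining properties of an almost strict $n$-functor (see \refalmoststrict): commutativity with source and target maps, with identities, and with the composites $\circ_p$ at every level. The strategy exploits the parallel already exhibited by \refVW: $W$ is the numerical shadow of $V$ obtained by replacing each vector space with its dimension, so $\mcG$ is essentially $\mcF$ followed by this dimension-taking projection. Thus the proof runs in close parallel to that of \refhomfunctor, but with all canonical isomorphisms of the form $\R^a\x\R^b\simeq\R^{a+b}$ replaced by the genuine equality $a+b=a+b$ in $\N_0$, which in fact makes the computations simpler.

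First I would verify that $\mcG$ takes values in $W(l)$ by checking the index constraints in the definition of $W(l)$. If $(a_l,\mcMhat(a_{l-1},b_{l-1},\dots))\in X(l)$, then $a_l$ is a critical point of the chosen Morse function on a moduli space that, by \refmodulispacecompact, has dimension $\Ind(a_{l-1})-\Ind(b_{l-1})-1$; hence $\Ind(a_l)\leq\Ind(a_{l-1})-\Ind(b_{l-1})-1$, which is the required strict inequality $\Ind(a_l)<\Ind(a_{l-1})-\Ind(b_{l-1})$. Iterating this along the history tuple produces exactly the chain of inequalities defining $W(l)$.

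For source and target, a direct unwinding of the definitions of $s$ and $t$ on $X(l)$ and on $W(l)$ shows $\mcG\circ s=s\circ\mcG$ and $\mcG\circ t=t\circ\mcG$: both $s$ and $t$ act on an $l$-cell of $\mcX$ by extracting a critical point from the head of the history and shifting, and both of these operations manifestly commute with taking $\Ind$. For identities, the identity $\mathbf 1_{A_l}$ in $\mcX$ identifies the zero-dimensional moduli space $\mcMhat(a_l,a_l,\dots)$ with its unique critical point, which has Morse index zero; this matches exactly the leading ``$0$'' in the identity tuple of $\mcW$, so $\mcG(\mathbf 1_{A_l})=\mathbf 1_{\mcG(A_l)}$.

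The main step, and the main obstacle, is commutativity with $\circ_p$, which requires handling all three case distinctions ($p=0$, $1\leq p\leq l-2$, $p=l-1$) of the composite in $\mcX$. In the first two cases the composite is a cartesian product of moduli spaces whose history components $a_k$ and $c_k$ pair into $(a_k,c_k)$ with $\Ind(a_k,c_k)=\Ind(a_k)+\Ind(c_k)$, which on the numerical side produces exactly the coordinate-wise sums $i_k+\mu_k$ prescribed by the composite in $\mcW$, while the matching coordinates at and below level $p$ are left unchanged. In the third case ($p=l-1$) the history is unchanged under gluing of trajectories, and again $\Ind$ is additive on the pair $(a_l,c_l)$, matching the $\mcW$-composite. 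Once this bookkeeping is in place the verification collapses to associativity and commutativity of addition in $\N_0$, paralleling the computation in the proof of \refhomfunctor\ but avoiding all canonical isomorphisms; equivalently, the theorem follows at once by composing the already-established functor $\mcF$ from \refhomfunctor\ with the evident dimension map $V\to W$ from \refVW.
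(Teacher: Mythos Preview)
Your proposal is correct and mirrors the paper's own proof almost exactly: the paper likewise invokes \refVW\ to treat \refhomfunctor\ and \refwfunctor\ simultaneously, then checks compatibility with source/target, identities, and the composite, and finally verifies the index constraints via the dimension formula for $\mcMhat$. Your additional remark that one may simply compose $\mcF$ with the dimension map $V\to W$ is in the same spirit as the paper's reduction, and your explicit case split on $p$ is a harmless unpacking of what the paper handles in one stroke.
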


\begin{proof}[Proof of \refhomfunctor\ and \refwfunctor]
Using \refVW\ we can proof \refhomfunctor\ and \refwfunctor\ simultanously, i.e. it is sufficient to show the claim for $\mcF$.

{\em (a) $\mcF$ is compatible with source and target functions:} Let $A_l \in X(l)$ with
$$ 
A_l =
\left(a_l, \mcMhat\bigl(a_{l-1}, b_{l-1}, f_{l-1
\left[
\begin{smallmatrix}
{a_{l-2}, \dots, a_0} \\
{b_{l-2}, \dots, b_0}
\end{smallmatrix}
\right]
}\bigr)\right)
$$ 
and compute for the source function
\begin{align*}
 \mcF( s( A_l))& = \mcF 
 \left(a_{l-1}, \mcMhat\bigl(a_{l-2}, b_{l-2}, f_{l-2
\left[
\begin{smallmatrix}
{a_{l-3}, \dots, a_0} \\
{b_{l-3}, \dots, b_0}
\end{smallmatrix}
\right]
}\bigr)\right) \\
& = \left(
\Ind(a_{l-1}), \left[ \begin{smallmatrix}
            \Ind(a_{l-2}), \dots, \Ind(a_0) \\
            \Ind(b_{l-2}), \dots, \Ind(b_0)
           \end{smallmatrix}
\right]\right) \\
& = s\left(
\Ind(a_{l}), \left[ \begin{smallmatrix}
            \Ind(a_{l-1}), \dots, \Ind(a_0) \\
            \Ind(b_{l-1}), \dots, \Ind(b_0)
           \end{smallmatrix}
\right]\right) \\
& = s(\mcF(A_l)).
\end{align*} 
The claim for the target function follows similarly.


\vsp

{\em (b) $\mcF$ is compatible with the identity functions:} Let $A_l \in X(l)$ as in {\em (a)} and compute 
\begin{align*}
 \mcF(\mathbf 1 (A_l)) & = \mcF 
 \left(a_l, \mcMhat\bigl(a_{l}, a_l, f_{l
\left[
\begin{smallmatrix}
{a_{l-1}, \dots, a_0} \\
{b_{l-1}, \dots, b_0}
\end{smallmatrix}
\right]
}\bigr)\right)
=
\left(
0, \left[ \begin{smallmatrix}
            \Ind(a_{l}), \Ind(a_{l-1}), \dots, \Ind(a_0) \\
            \Ind(a_{l}), \Ind(b_{l-1}), \dots, \Ind(b_0)
           \end{smallmatrix}
\right]\right) \\
& = \mathbf 1  
\left(
\Ind(a_{l}), \left[ \begin{smallmatrix}
            \Ind(a_{l-1}), \dots, \Ind(a_0) \\
            \Ind(b_{l-1}), \dots, \Ind(b_0)
           \end{smallmatrix}
\right]\right)    
= \mathbf 1 (\mcF (A_l)).
\end{align*}


\vsp

{\em (c) $\mcF$ is compatible with the composite:} Let $(C_l, A_l) \in X(l)\x_p X(l)$ with
\begin{align*}
A_l & = \left(a_l, \mcMhat(a_{l-1}, b_{l-1}, f_{l-1
\left[
\begin{smallmatrix}
a_{l-2}, \dots, a_{p+1}, x_p, \al_{p-1}, \dots, \al_0 \\
b_{l-2}, \dots, b_{p+1}, y_p, \be_{p-1}, \dots, \be_0
\end{smallmatrix}
\right]
})
\right), \\
C_l & = \left(c_l, \mcMhat(c_{l-1}, d_{l-1}, f_{l-1
\left[
\begin{smallmatrix}
c_{l-2}, \dots, c_{p+1}, y_p, \al_{p-1}, \dots, \al_0 \\
d_{l-2}, \dots, d_{p+1}, z_p, \be_{p-1}, \dots, \be_0
\end{smallmatrix}
\right]
})
\right)
\end{align*}
and compute
\begin{align*}
& \mcF(C_l \circ_p A_l) \\
& = \mcF \left((a_l, c_l), \mcMhat\bigl((a_{l-1}, c_{l-1}), (b_{l-1}, d_{l-1}), f_{l-1
\left[
\begin{smallmatrix}
(a_{l-2}, c_{l-2}), \dots,  (a_{p+1}, c_{p+1}), x_p, \al_{p-1}, \dots, \al_0 \\
(b_{l-2}, d_{l-2}), \dots,(b_{p+1}, d_{p+1}), z_p, \be_{p-1}, \dots, \be_0  
\end{smallmatrix}
\right]
}\bigr)
\right) \\
& = 
\left(
\begin{smallmatrix} \Ind(a_{l}) + \Ind(c_l) \end{smallmatrix}, \left[ \begin{smallmatrix}
            (\Ind(a_{l-1}) + \Ind(c_{l-1})), \dots, (\Ind(a_{p+1}) + \Ind(c_{p+1})), \Ind(x_p), \Ind(\al_{p-1}), \dots, \Ind(\al_0) \\
            (\Ind(b_{l-1}) + \Ind(d_{l-1})), \dots, (\Ind(b_{p+1}) + \Ind(d_{p+1})), \Ind(z_p), \Ind(\be_{p-1}), \dots, \Ind(\be_0)
           \end{smallmatrix}
\right]\right) \\
& = 
\left(
\Ind(c_{l}), \left[ \begin{smallmatrix}
            \Ind(c_{l-1}), \dots, \Ind(c_{p+1}), \Ind(y_p), \Ind(\al_{p-1}), \dots, \Ind(\al_0) \\
            \Ind(d_{l-1}), \dots, \Ind(d_{p+1}), \Ind(z_p), \Ind(\be_{p-1}), \dots, \Ind(\be_0)
           \end{smallmatrix}
\right]\right)  \\
& \qquad \circ_p
\left(
\Ind(a_{l}), \left[ \begin{smallmatrix}
            \Ind(a_{l-1}), \dots, \Ind(a_{p+1}), \Ind(x_p), \Ind(\al_{p-1}), \dots, \Ind(\al_0) \\
            \Ind(b_{l-1}), \dots, \Ind(b_{p+1}), \Ind(y_p), \Ind(\be_{p-1}), \dots, \Ind(\be_0)
           \end{smallmatrix}
\right]\right) \\ 
& = \mcF(C_l) \circ_p \mcF(A_l).
\end{align*}


\vsp

{\em (d) $\mcF$ is compatible with the index requirements of the elements in the globular set $V$:} In fact, the index requirements of $V$ were set up with $X$ in mind. Consider
\begin{align*}
\left(a_l, \mcMhat\bigl(a_{l-1}, b_{l-1}, f_{l-1
\left[
\begin{smallmatrix}
{a_{l-2}, \dots, a_0} \\
{b_{l-2}, \dots, b_0}
\end{smallmatrix}
\right]
}\bigr)\right)
=A_l 
\in X(l)
\end{align*}
with $a_{l-1} \neq b_{l-1}$ and note 
$$\dim \mcM\bigl(a_{l-1}, b_{l-1}, f_{l-1
\left[
\begin{smallmatrix}
{a_{l-2}, \dots, a_0} \\
{b_{l-2}, \dots, b_0}
\end{smallmatrix}
\right]
}\bigr)
= \Ind(a_{l-1}) - \Ind(b_{l-1})
$$ 
and 
$$\dim \mcMhat \bigl(a_{l-1}, b_{l-1}, f_{l-1
\left[
\begin{smallmatrix}
{a_{l-2}, \dots, a_0} \\
{b_{l-2}, \dots, b_0}
\end{smallmatrix}
\right]
}\bigr)
= \Ind(a_{l-1}) - \Ind(b_{l-1}) -1.
$$
If $a_{l-1} \neq b_{l-1}$ then $\Ind(a_{l-1}) - \Ind(b_{l-1}) \geq 1$ since we are considering a negative gradient flow. If $a_{l-1} = b_{l-1}$ then we formally consider both $\mcM(a_{l-1}, a_{l-1}, \dots)$ and $\mcMhat(a_{l-1}, a_{l-1}, \dots)$ as zero dimensional.
Since $a_l$ lives on the unparametrized space $\mcMhat(a_{l-1}, b_{l-1}, \dots)$ we have $0 \leq \Ind(a_l) < \Ind(a_{l-1}) -\Ind(b_{l-1})$ for all $1 \leq l \leq n$. This satisfies the index conditions in the definition of $V(l)$ and concludes the proof of \refhomfunctor\ and \refwfunctor.
\end{proof}


\section{Example: The $2$-torus $\mathbb T^2$}

Consider the $2$-torus $\mathbb T^2 = \R^2\slash \Z^2$ with the flat metric and the Morse function $f_0(x,y)=\cos(2\pi x) + \cos (2 \pi y)$ whose critical points are $\{(\frac{k}{2}, \frac{l}{2}) \mid k,l \in \Z\}$. Let us work on the fundamental domain $[0,1] \x [0,1]$ which has four critical points $w:=w_0=(0,0)=(1,0)=(0,1)=(1,1)$ and $x:=x_0=(\frac{1}{2}, 0)=(\frac{1}{2}, 1)$ and $y:=y_0=(0, \frac{1}{2})=(1, \frac{1}{2})$ and $z:=z_0=(\frac{1}{2}, \frac{1}{2})$ as in Figure \ref{2torus}. We suppress the level indices in $w_0, x_0, y_0, z_0$ since it would complicate the notation.

\begin{figure}[h] 
\label{2torus}

\begin{center}

\input{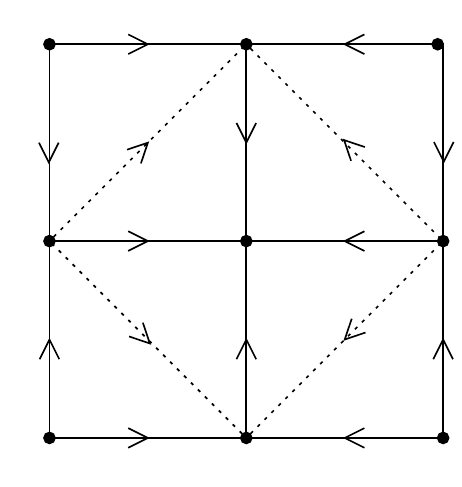_t}
\caption{Morse trajectories on $\mathbb T^2$}

\end{center}

\end{figure}


We have $\Ind(w)=2$, $\Ind(x)=\Ind(y)=1$ and $\Ind(z)=0$ and the moduli spaces $\mcMhat(w,x)$, $\mcMhat(w,y)$, $\mcMhat(x,z)$ and $\mcMhat(y,z)$ are zero dimensional and have two connected components each. We denote them by $\mcMhat(w,x)=\mcMhat(w, x)_\diamond \cup \mcMhat(w,x)_\star$ etc. 

\textit{If we consider a (component of a) zero dimensional moduli space as a \textbf{point} instead of a \textbf{space}, we write $\mfmhat(\cdot, \cdot)$ instead of $\mcMhat(\cdot, \cdot)$} in the following.

\subsection{The almost strict Morse $n$-category}

$\mcMhat(w,z)$ is $1$-dimensional and has four connected components. We choose a Morse function 
$f_{1 \left[  \begin{smallmatrix} w \\z \end{smallmatrix} \right]}$
on $\mcMhat(w,z)$ which is strictly monotone and has its critical points on the endpoints of the intervals. Let $f_{1 \left[  \begin{smallmatrix} w \\z \end{smallmatrix} \right]}$ be maximal on {\em the critical point} $(\mfmhat(w,y)_i, \mfmhat(y,z)_j)$ and minimal on {\em the critical point} $(\mfmhat(w,x)_i, \mfmhat(x,z)_j)$ for $i$, $j \in \{\diamond ,\star \}$.
We have
\beqs
X(0)=\{w, x, y, z \}
\eeqs
and we compute
\beqs
X(1)=
\left\{
\begin{gathered}
\left.
\begin{gathered}
(\mfmhat(w,x)_i, \mcMhat(w,x)), (\mfmhat(w,y)_i, \mcMhat(w,y)), \\
(\mfmhat(x,z)_i, \mcMhat(x,z)), (\mfmhat(y,z)_i, \mcMhat(y,z)), \\
((\mfmhat(w,x)_i, \mfmhat(x,z)_j), \mcMhat(w,z)), \\
((\mfmhat(w,y)_i, \mfmhat(y,z)_j), \mcMhat(w,z))
\end{gathered}
\right| i, j \in \{\diamond, \star \}
\end{gathered}
\right\}.
\eeqs
and obtain
\begin{align*}
X(1)\x_0 X(1)& =\{(\xiti, \xi) \in X(1) \x X(1) \mid s(\xiti) = t(\xi)\} \\
& 
=
\left\{
\begin{gathered}
\left.
\begin{gathered}
\left( (\mfmhat(x,z)_i, \mcMhat(x,z)), (\mfmhat(w,x)_j, \mcMhat(w,x)) \right), \\
\left( (\mfmhat(y,z)_i, \mcMhat(y,z)), (\mfmhat(w,y)_j, \mcMhat(w,y)) \right)
\end{gathered}
\right|
i, j \in \{\diamond, \star\}
\end{gathered}
\right\}
\end{align*}
and concatenate exemplarily
\beqs
\left(\mfmhat(w, x)_i, \mcMhat(w,x)\right) \circ_0 \left(\mfmhat(x,z)_j, \mcMhat(x,z)\right) 
= \left( (\mfmhat(w,x)_i, \mfmhat(x,z)_j), \mcMhat(w,z)\right).
\eeqs
We set
\begin{align*}
 \blacksquare &:= \bigl((\mfmhat(w,y)_\star, \mfmhat(y,z)_\diamond),(\mfmhat(w,x)_\diamond, \mfmhat(x,z)_\star)\bigr), \\
 \blacktriangle & := \bigl((\mfmhat(w,y)_\diamond, \mfmhat(y,z)_\diamond), (\mfmhat(w,x)_\diamond, \mfmhat(x,z)_\diamond)\bigr), \\
 \spadesuit & := \bigl((\mfmhat(w,y)_\diamond, \mfmhat(y,z)_\star), (\mfmhat(w,x)_\star, \mfmhat(x,z)_\diamond)\bigr), \\
 \clubsuit & := \bigl((\mfmhat(w,y)_\star, \mfmhat(y,z)_\star), (\mfmhat(w,x)_\star, \mfmhat(x,z)_\star)\bigr)
\end{align*}
and compute
\beqs
X(2)=
\left\{
\begin{gathered}
\left.
\begin{gathered}
\blacktriangledown_{wx}:= \left(\mfmhat(w,x)_i, \mcMhat(\mfmhat(w,x)_i, \mfmhat(w,x)_i   ) \right), \\
\blacktriangledown_{wy}:=\left(\mfmhat(w,y)_i, \mcMhat(\mfmhat(w,y)_i, \mfmhat(w,y)_i)  \right), \\
\blacktriangledown_{xz}:= \left( \mfmhat(x,z)_i, \mcMhat(\mfmhat(x,z)_i, \mfmhat(x,z)_i )  \right),\\
\blacktriangledown_{yz}:= \left( (\mfmhat(y,z)_i, \mcMhat(\mfmhat(y,z)_i, \mfmhat(y,z)_i ) \right), \\
\left(\mfmhat(\blacksquare), \mcMhat (  \blacksquare ) \right), 
\left(\mfmhat(\blacktriangle), \mcMhat( \blacktriangle )\right), \\
\left(\mfmhat(\spadesuit),\mcMhat( \spadesuit) \right), 
\left(\mfmhat(\clubsuit), \mcMhat(  \clubsuit)\right)
\end{gathered}
\right|
i \in \{\diamond, \star \}
\end{gathered}
\right\}.
\eeqs
We calculate for $i \in \{\diamond, \star\}$ and $q \in \{x,y\}$:
\begin{align*}
& s^2\left(\mfmhat(w,q)_i, \mcMhat(\mfmhat(w,q)_i, \mfmhat(w,q)_i   ) \right) 
= s \left( \mfmhat(w,q)_i, \mcMhat(w,q)  \right) = w,	 \\
& t^2\left(\mfmhat(w,q)_i, \mcMhat(\mfmhat(w,q)_i, \mfmhat(w,q)_i   ) \right) 
= t \left( \mfmhat(w,q)_i , \mcMhat(w,q) \right) = q, \\
& s^2 \left( \mfmhat(q,z)_i, \mcMhat(\mfmhat(q,z)_i, \mfmhat(q,z)_i )  \right)
= s \left( \mfmhat(q,z)_i, \mcMhat(q,z) \right)= q ,\\
& t^2 \left( \mfmhat(q,z)_i, \mcMhat(\mfmhat(q,z)_i, \mfmhat(q,z)_i )  \right)
= t \left(\mfmhat(q,z)_i, \mcMhat( q,z) \right) = z , \\
\end{align*}
and
\begin{align*}
& s^2 \left( (\mfmhat((\mfmhat(w,y)_\star, \mfmhat(y,z)_\diamond),(\mfmhat(w,x)_\diamond, \mfmhat(x,z)_\star)), \mcMhat ( \blacksquare) \right) \\
& \quad = s \left( (\mfmhat(w,y)_\star, \mfmhat(y,z)_\diamond), \mcMhat(w,z ) \right) \\
& \quad = w ,\\
& s^2 \left(\mfmhat((\mfmhat(w,y)_\diamond, \mfmhat(y,z)_\diamond), (\mfmhat(w,x)_\diamond, \mfmhat(x,z)_\diamond)), \mcMhat( \blacktriangle )\right) \\
& \quad = s \left( (\mfmhat(w,y)_\diamond, \mfmhat(y,z)_\diamond), \mcMhat(w,z ) \right) \\
& \quad = w, \\
& s^2 \left(\mfmhat((\mfmhat(w,y)_\diamond, \mfmhat(y,z)_\star), (\mfmhat(w,x)_\star, \mfmhat(x,z)_\diamond)), \mcMhat( \spadesuit) \right) \\
& \quad = s \left( (\mfmhat(w,y)_\diamond, \mfmhat(y,z)_\star), \mcMhat(w,z)  \right) \\
& \quad = w , \\
& s^2 \left(\mfmhat((\mfmhat(w,y)_\star, \mfmhat(y,z)_\star), (\mfmhat(w,x)_\star, \mfmhat(x,z)_\star)), \mcMhat( \clubsuit) \right) \\
& \quad = s \left( (\mfmhat(w,y)_\star, \mfmhat(y,z)_\star), \mcMhat(w,z) \right) \\
& \quad = w
\end{align*}
and
\begin{align*}
& t^2 \left( (\mfmhat((\mfmhat(w,y)_\star, \mfmhat(y,z)_\diamond),(\mfmhat(w,x)_\diamond, \mfmhat(x,z)_\star)), \mcMhat ( \blacksquare) \right) \\
& \quad = t \left( (\mfmhat(w,x)_\diamond, \mfmhat(x,z)_\star), \mcMhat(w,z ) \right) \\
& \quad = z ,\\
& t^2 \left(\mfmhat((\mfmhat(w,y)_\diamond, \mfmhat(y,z)_\diamond), (\mfmhat(w,x)_\diamond, \mfmhat(x,z)_\diamond)), \mcMhat( \blacktriangle )\right) \\
& \quad = t \left(  (\mfmhat(w,x)_\diamond, \mfmhat(x,z)_\diamond) , \mcMhat(w,z ) \right) \\
& \quad = z, \\
& t^2 \left(\mfmhat((\mfmhat(w,y)_\diamond, \mfmhat(y,z)_\star), (\mfmhat(w,x)_\star, \mfmhat(x,z)_\diamond)), \mcMhat( \spadesuit) \right) \\
& \quad = t \left(  (\mfmhat(w,x)_\star, \mfmhat(x,z)_\diamond) , \mcMhat(w,z)  \right) \\
& \quad = z , \\
& t^2 \left(\mfmhat((\mfmhat(w,y)_\star, \mfmhat(y,z)_\star), (\mfmhat(w,x)_\star, \mfmhat(x,z)_\star)), \mcMhat( \clubsuit) \right) \\
& \quad = t \left( (\mfmhat(w,x)_\star, \mfmhat(x,z)_\star)  , \mcMhat(w,z) \right) \\
& \quad = z
\end{align*}
This yields 
\begin{align*}
&X(2) \x_1 X(2)  \\
& =\{(\xiti, \xi) \in X(2) \x X(2) \mid t(\xi)=s(\xiti)\} \\
&= \left\{
\begin{gathered}
\left( \left(\mfmhat(w,q)_i, \mcMhat(\mfmhat(w,q)_i, \mfmhat(w,q)_i   ) \right), \left(\mfmhat(w,q)_i, \mcMhat(\mfmhat(w,q)_i, \mfmhat(w,q)_i   ) \right) \right) \\
\left( \left( \mfmhat(q,z)_i, \mcMhat(\mfmhat(q,z)_i, \mfmhat(q,z)_i )  \right), \left( \mfmhat(q,z)_i, \mcMhat(\mfmhat(q,z)_i, \mfmhat(q,z)_i )  \right)  \right) \\
\mbox{for } i \in \{\diamond, \star \}, q \in \{x,y\}
\end{gathered}
\right\}
\end{align*}
where we compute for example
\begin{align*}
& \left(\mfmhat(w,q)_i, \mcMhat(\mfmhat(w,q)_i, \mfmhat(w,q)_i   ) \right) \circ_1 \left(\mfmhat(w,q)_i, \mcMhat(\mfmhat(w,q)_i, \mfmhat(w,q)_i   ) \right) \\
& \quad = 
\left(
(\mfmhat(w,q)_i,\mfmhat(w,q)_i), \mcMhat( (\mfmhat(w,q)_i,\mfmhat(w,q)_i), (\mfmhat(w,q)_i,\mfmhat(w,q)_i) )
\right) \\
& \quad \simeq
\left(
\mfmhat(w,q)_i, \mcMhat(\mfmhat(w,q)_i,\mfmhat(w,q)_i) 
\right)
\end{align*}
since the underlying space is a singleton. We find
\begin{align*}
& X(2)\x_0 X(2) \\
& \quad  =\{(\xiti, \xi) \in X(2) \x X(2) \mid t^2(\xi)=s^2(\xiti)\} \\
& \quad = \left\{
\begin{gathered}
\left( 
\left( \mfmhat(q,z)_i, \mcMhat(\mfmhat(q,z)_i, \mfmhat(q,z)_i )  \right)
, \left(\mfmhat(w,q)_i, \mcMhat(\mfmhat(w,q)_i, \mfmhat(w,q)_i   ) \right) 
\right) \\
\mbox{for } i \in \{\diamond , \star \}, q \in \{x,y\}
\end{gathered}
\right\}
\end{align*}
and we compute 
\begin{align*}
& \left( \mfmhat(q,z)_i, \mcMhat(\mfmhat(q,z)_i, \mfmhat(q,z)_i )  \right) \circ_0 
\left(\mfmhat(w,q)_i, \mcMhat(\mfmhat(w,q)_i, \mfmhat(w,q)_i   ) \right) \\
& \quad = 
\left( ( \mfmhat(w,q)_i ,\mfmhat(q,z)_i), \mcMhat(  (\mfmhat(w,q)_i, \mfmhat(q,z)_i), (\mfmhat(w,q)_i, \mfmhat(q,z)_i ))
\right).
\end{align*}
Note that all elements of $X(l)$ for $l \geq 3$ will be of the form $(\xi, \mcMhat(\xi, \xi))$ such that they do not contribute any new information.


\subsection{The image of the Morse $n$-category under the functors}

The elements of $X(0)$, $X(1)$ and $X(2)$ look quite complicated. Let us now consider their image under $\mcF$ and $\mcG$. For sake of readability, we will allways use the short notation introduced in \refVW\ which expresses the functor $\mcF$ in terms of $\mcG$. Thus it is sufficient to calculate the image of $\mcX$ under $\mcG$. 

\vsp

For $X(0)=\{w, x, y, z \}$, we obtain
\beqs
\mcG(w)=2, \quad \mcG(x)=1, \quad  \mcG(y)=1, \quad  \mcG(z)=0.
\eeqs
which are the indices of the critical points. Now consider $X(1)$ and obtain for the first four elements
\begin{align*}
 \mcG(\mfmhat(w,x)_i, \mcMhat(w,x)) & = \left(0 \left[\begin{matrix} 2 \\1 \end{matrix} \right] \right), \\
 \mcG (\mfmhat(w,y)_i, \mcMhat(w,y))& = \left(0 \left[\begin{matrix} 2 \\1 \end{matrix} \right] \right), \\
 \mcG(\mfmhat(x,z)_i, \mcMhat(x,z)) & = \left(0 \left[\begin{matrix} 1 \\0 \end{matrix} \right] \right), \\
\mcG (\mfmhat(y,z)_i, \mcMhat(y,z)) & = \left(0 \left[\begin{matrix} 1 \\0 \end{matrix} \right] \right) 
\end{align*}
and for the last two elements
\begin{align*}
\mcG((\mfmhat(w,x)_i, \mfmhat(x,z)_j), \mcMhat(w,z))& = \left(0 \left[\begin{matrix} 2 \\0 \end{matrix} \right] \right), \\
 \mcG((\mfmhat(w,y)_i, \mfmhat(y,z)_j), \mcMhat(w,z))& = \left(1 \left[\begin{matrix} 2 \\0 \end{matrix} \right] \right).
\end{align*}
For the elements of $X(2)$, we calculate
\begin{align*}
 \mcG(\blacktriangledown_{wx})=\mcG(\blacktriangledown_{wy}) & = \left(0 \left[\begin{matrix} 0 & 2 \\0 & 1 \end{matrix} \right] \right), \\
 \mcG(\blacktriangledown_{xz}) =\mcG(\blacktriangledown_{yz}) & =\left(0 \left[\begin{matrix} 0 & 1 \\0 & 0 \end{matrix} \right] \right), \\
 \mcG(\blacksquare, \mcMhat(\blacksquare)) = \mcG(\blacktriangle, \mcMhat(\blacktriangle)) = \mcG(\spadesuit, \mcMhat(\spadesuit)) = \mcG(\clubsuit, \mcMhat(\clubsuit)) & = \left(0 \left[\begin{matrix} 1 & 2 \\0 & 0 \end{matrix} \right] \right).
\end{align*}
Thus the functors simplify the picture considerably by providing an overview of the history of the indices of the critical points resp.\ the dimension of the involved moduli spaces.


\end{document}